\documentclass[a4paper, 12pt]{article}

\usepackage[sort&compress]{natbib}
\bibpunct{(}{)}{;}{a}{}{,} 

\usepackage{amsthm, amsmath, amssymb, mathrsfs, multirow, url, subfigure}
\usepackage{graphicx} 
\usepackage{ifthen} 
\usepackage{amsfonts}
\usepackage[usenames]{color}
\usepackage{fullpage}
\usepackage[shortlabels]{enumitem}



\theoremstyle{plain} 
\newtheorem{thm}{Theorem}
\newtheorem{cor}{Corollary}

\theoremstyle{definition}

\theoremstyle{remark}

\newtheorem*{bayespar}{\bf Bayesian}
\newtheorem*{freqpar}{\bf Frequentist}

\newcommand{\prob}{\mathsf{P}}

\newcommand{\pval}{\mathsf{pval}}
\newcommand{\sev}{\mathsf{sev}}

\newcommand{\nm}{{\sf N}}

\newcommand{\RR}{\mathbb{R}}

\newcommand{\YY}{\mathbb{Y}}

\newcommand{\TT}{\mathbb{T}}

\newcommand{\prior}{\mathsf{Q}}

\newcommand{\lPi}{\underline{\Pi}}
\newcommand{\uPi}{\overline{\Pi}}


\title{Possibility-theoretic statistical inference offers performance and probativeness assurances\footnote{This is an extended version of the conference paper \citep{improbe.2022}}}
\author{Leonardo Cella\footnote{Department of Statistical Sciences, Wake Forest University, {\tt cellal@wfu.edu}} \; and \; Ryan Martin\footnote{Department of Statistics, North Carolina State University, {\tt rgmarti3@ncsu.edu}}}
\date{\today}

\begin{document}

\maketitle 
\begin{abstract}
Statisticians are largely focused on developing methods that {\em perform} well in a frequentist sense---even the Bayesians. But the widely-publicized replication crisis suggests that these performance guarantees alone are not enough to instill confidence in scientific discoveries. In addition to reliably detecting hypotheses that are (in)compatible with data, investigators require methods that can {\em probe} for hypotheses that are actually supported by the data.  In this paper, we demonstrate that valid inferential models (IMs) achieve both performance and probativeness properties and we offer a powerful new result that ensures the IM's probing is reliable.  We also compare and contrast the IM's dual performance and probativeness abilities with that of Deborah Mayo's severe testing framework. 

\smallskip

\emph{Keywords and phrases:} Bayesian; frequentist; imprecise probability; inferential model; p-value; severity; validity.
\end{abstract}

\section{Introduction}
\label{S:intro}

Important decisions affecting our everyday experiences are becoming increasingly data-driven.  But is data helping us make better decisions?  In many ways, the answer is obviously yes; but in other ways the answer is less clear.  The widely-publicized replication crisis in science is one issue that raises serious concerns, so much so that, in 2019, the American Statistical Association's president commissioned a formal {\em Statement on Statistical Significance and Replicability} that appeared in 2021.\footnote{\url{https://magazine.amstat.org/blog/2021/08/01/task-force-statement-p-value/}} As with most official statements, in almost any context, this one says very little, e.g., 
\begin{quote}
{\em Different measures of uncertainty can complement one another; no single measure serves all purposes.}
\end{quote}
While this assertion is politically (and perhaps technically) correct, it offers nothing to help improve the state of affairs.  The lack of any clear guidance in this official statement reveals that there are important and fundamental questions concerning the foundations of statistics and inductive inference that remain unanswered:
\begin{quote}
{\em Should probability enter to capture degrees of belief about claims? ... Or to ensure we won't reach mistaken interpretations of data too often in the long run of experience?} \citep[][p.~xi]{mayo.book.2018}
\end{quote}
The two distinct roles of probability highlighted in the quote above correspond to the classical frequentist and Bayesian schools of statistical inference, which have two fundamentally different priorities, referred to here as {\em performance} and {\em probativeness}, respectively.  Over the last 50+ years, however, the lines between the two perspectives and their distinct priorities have been blurred.  Indeed, both Bayesians and frequentists now focus almost exclusively on performance.  These performance considerations are genuinely important for the logic of statistical inference: 
\begin{quote}
{\em even if an empirical frequency-based view of probability is not used directly as a basis for inference, it is unacceptable if a procedure\ldots of representing uncertain knowledge would, if used repeatedly, give systematically misleading conclusions} \citep[][p.~295]{reid.cox.2014}.
\end{quote}
As the replication crisis has taught us, however, there is more to statistical inference than achieving, say, Type~I and II error probability control.  Beyond performance, we are also concerned with probativeness, i.e., methods' ability to probe for hypotheses that are genuinely supported by the observed data.  Modern statistical methods cannot achieve both performance and probativeness objectives, so a fully satisfactory framework for scientific inferences requires new perspectives. 

Section~\ref{SS:two} gives the problem setup and briefly describes the Bayesian versus frequentist {\em two-theory problem}. There we justify our above claim that modern statistical methods fail to meet both the performance and probativeness objectives.  This includes the default-prior Bayes solution that aims to strike a balance between the two theories.  What holds the default-prior Bayes solution back from meeting the performance and probativeness objectives is its lack of calibration, which is directly related to the constraint that the posterior distribution be a precise probability.  Fortunately, the relatively new, possibility-theoretic {\em inferential model} (IM) framework, reviewed in Section~\ref{SS:ims} below, is able to achieve greater flexibility by embracing a certain type and degree of imprecision in its construction.  We present here a key result, namely, Theorem~\ref{theorem:IMvalidity}, that drives the IM's reliability, even into the new probativeness territory considered here. 

Our main contribution here, in Section~\ref{S:ps}, is a demonstration of the IM's ability to simultaneously achieve both {\em performance} and {\em probabitiveness}.  On the performance side, we show in Section~\ref{SS:perf} that procedures, e.g., hypothesis tests and confidence sets, derived from the IM's necessity and possibility measure output control the frequentist error rates at the nominal level.  Of particular interest is that there are no restrictions on the kinds of questions that the IM can address, so it is at least conceptually straightforward to eliminate nuisance parameters and obtain provably reliable marginal inference.  

We enter new territory in Section~\ref{SS:prob}, where we consider the question of probativeness.  First: {\em what is probing?}  In classical hypothesis testing, typically a null hypothesis is offered and a decision is made to either reject that hypothesis or not.  Often this null hypothesis represents a scientific status quo, e.g., that a new mental health treatment program has no effect on patients' well-being.  Those who follow the mechanical {\em NHST} (null hypothesis significance test) guidelines would believe that all the statistical analysis offers is a reject-or-not decision; in that case, if the investigator's data leads to a reject conclusion, then apparently he/she has made a psychological discovery.  Of course, that logic is flawed because all the statistical test has determined is that the data are incompatible with the status quo.  More specifically, the test does not imply that the data actually support the complementary hypothesis that there is an appreciable benefit to the new treatment, which is the bar for claiming a scientific discovery.  Probing aims to dig deeper than (in)compatibility and look for genuine support.  None of the standard statistical tools offer this probing, so something new is needed.   

Fortunately, possibility measures and imprecise probabilities more generally contain lots of relevant information and, in particular, to each relevant hypothesis it returns a pair of numbers.  As discussed in Section~\ref{SS:perf}, only one of those numbers is used for the usual performance-focused developments.  That is, we reject a hypothesis if its degree of possibility or plausibility is small, since that is an indication of incompatibility.  The other number is commonly understood as measuring a degree of necessity, belief, or support, so a natural question is if this feature of the IM output can be used for probing.  In Section~\ref{SS:prob} we give an affirmative answer to this question and, furthermore, offer some strong theoretical support for the claim that the IM's probing is provably reliable.

The probativeness conclusion is a direct consequence of the IM output's imprecision.  That the additional flexibility of imprecision creates opportunities for more nuanced judgments is one of the motivations for accounting for imprecision, so this is no big surprise.  But our contribution here is valuable for several reasons.  First, the statistical community is aware of this need to see beyond basic performance criteria, but general and easy-to-follow guidance is still lacking.  In Section~\ref{S:mayo} below we summarize a relatively recent proposal in \citet{mayo.book.2018} and compare it to what the IM framework offers.  There we argue that a difficulty with supplementing the standard testing machinery with a probing add-on, as Mayo and others have proposed, is that frequentism lacks an appropriate language to describe anything beyond (in)compatibility.  To clearly articulate what probing or support means, we need a richer language than what frequentist statistics offers.  The possibility-theoretic IM formulation allows for this, but without sacrificing on the frequentist-like performance guarantees.  That is, IMs offer a simpler interpretation based on possibilistic reasoning, where the necessary but complicated frequentist considerations are hidden under the hood in a calibration engine (Theorem~\ref{theorem:IMvalidity}) that powers the IM. Second, our contribution showcases the important role played by imprecise probability, by reinforcing the key point that imprecision is {\em not} due to an inadequacy of the approach, but, rather, is an essential part of a complete and fully satisfactory solution to the statistical inference problem.

Following our comparison of IMs and Mayo's theory of severe testing, we provide several illustrations of the IM solution in Section~\ref{S:examples}, focusing primarily on its probing abilities.  This is followed by some concluding remarks in Section~\ref{S:discuss} and a few relevant details concerning hypothesis testing and connections to the IM theory in Appendix~\ref{app:test.im}. 


\section{Background}
\label{S:background}

\subsection{Two-theory problem}
\label{SS:two}

To set the scene, denote the observable data by $Y$.  The statistical model for $Y$ will be denoted by $\{\prob_\theta: \theta \in \TT\}$ and the unknown true value of the model parameter will be denoted by $\Theta$.  Note that the setup here is quite general: $Y$, $\Theta$, or both can be scalars, vectors, or something else.  We focus here on the typical case where {\em no genuine prior information is available/assumed}.  So, given only the model $\{\prob_{\theta}: \theta \in \TT\}$ and the observed data $Y=y$, the goal is to quantify uncertainty about $\Theta$ for the purpose of making inference.  For concreteness, we will interpret ``making inference'' as making (data-driven) judgments about hypotheses concerning $\Theta$.  In particular, we seek to assign numerical values---could be p-values, posterior probabilities, etc.---to hypotheses $H \subseteq \TT$ concerning $\Theta$ or some feature thereof.  

In a nutshell, the two dominant schools of thought in statistics are as follows. 

\begin{bayespar}
Uncertainty is quantified directly through specification of a prior probability distribution for $\theta$, representing the data analyst's {\em a priori} degrees of belief.  Bayes's theorem is then used to update the prior to a data-dependent posterior distribution for $\theta$.  The posterior probability of a hypothesis $H$ represents the analyst's degree of belief in the truthfulness of $H$, given data, and would be essential for inference concerning $H$.  
That is, the magnitudes of the posterior probabilities naturally drive the data analyst's judgments about which hypotheses are supported by the data and which are not.
\end{bayespar}

\begin{freqpar}
Uncertainty is quantified indirectly through the use of reliable procedures that control error rates. Consider, e.g., a p-value for testing a hypothesis $H$.  What makes such a p-value meaningful is that, by construction, it tends to be not-small when $H$ is true.  Therefore, observing a small p-value gives the data analyst reason to doubt the truthfulness of $H$:
\begin{quote}
{\em The force with which such a conclusion is supported is logically that of the simple disjunction: Either an exceptionally rare chance has occurred, or {\em [the hypothesis]} is not true} \citep[][p.~42]{fisher1973}.
\end{quote}
The p-value {\em does not} represent the ``probability of $H$'' in any sense.  So, a not-small (resp.~small) p-value cannot be interpreted as direct support for $H$ (resp.~$H^c$) or any sub-hypothesis thereof. 
\end{freqpar}

So, at least in principle, the Bayesian framework focuses on probativeness whereas the frequentist framework focuses on performance.  But the line between frequentist and modern Bayesian practice is not especially clear.  Even Bayesians typically assume little or no prior information, as we have assumed here, so default priors are the norm \citep[e.g.,][]{berger2006, jeffreys1946}.  With an artificial or default prior, however, the ``degree of belief'' interpretation of the posterior probabilities is lost, 
\begin{quote}
[Bayes's theorem] {\em does not create real probabilities from hypothetical probabilities} \citep[][p.~249]{fraser.copss}
\end{quote}
and, along with it, the probative nature of inferences based on them,
\begin{quote}
{\em ...any serious mathematician would surely ask how you could use} [Bayes's theorem] {\em with one premise missing by making up an ingredient and thinking that the conclusions of the} [theorem] {\em were still available} \citep[][p.~329]{fraser2011.rejoinder}.
\end{quote}
The default-prior Bayes posterior probabilities could still have performance assurances {\em if} they were suitably calibrated.  But the {\em false confidence theorem} \citep{balch.martin.ferson.2017} shows that this is not the case: there exists false hypotheses to which the posterior tends to assign large probabilities.  In particular, let $\prior_y$ denote a data-dependent probability distribution for $\Theta$, e.g., default-prior Bayes, fiducial, etc.  Then the false confidence theorem states that, for any $(\rho,\tau) \in (0,1)^2$, there exists hypotheses $H \subseteq \TT$ such that 
\[ H \not\ni \Theta \quad \text{and} \quad \prob_\Theta\{ \prior_Y(H) > \tau \} > \rho. \]
This implies that inferences based on the magnitudes of these probabilities---i.e., if $\prior_y(H)$ is small, then infer $H^c$---are at risk of being ``systematically misleading'' (cf.~Reid and Cox).  This explains why modern Bayesian analysis focuses less on probabilistic reasoning based on the posterior probabilities themselves and more on the performance of procedures (tests and credible sets) derived from the posterior distribution. Hence modern Bayesians and frequentists are not so different.  


The key take-away message is as follows.  Pure frequentist methods focus on detecting incompatibility between data and hypotheses (performance), so they do not offer any guidance on how to identify hypotheses actually supported by the data (probativeness).  Default-prior Bayesian methods are effectively no different, so this critique applies to them too.  More specifically, the default-prior Bayes posterior probabilities lack the calibration necessary to reliably check for either incompatibility or support.  Therefore, at least when prior information is vacuous, neither of the mainstream schools of thought in statistics can simultaneously achieve both the performance and probativeness objectives.

\subsection{Inferential models overview}
\label{SS:ims}

The inferential models (IM) framework was first developed in \citet{imbasics, imbook} as a fresh perspective on Fisher's fiducial argument \citep{fisher1935a, zabell1992} and on the Dempster--Shafer theory of belief functions \citep{dempster.copss, dempster1968a, dempster2008, shafer1976} in the context of statistical inference.  It aimed to balance the Bayesians' desire for belief assignments and the frequentists' desire for error rate control.  A key distinction between IMs and the familiar Bayesian and frequentist frameworks is that the output is an {\em imprecise probability} or, more specifically, a {\em necessity--possibility measure} pair.  
\begin{quote}
{\em Possibility is an entirely different idea from probability, and it is sometimes, we maintain, a more efficient and powerful uncertainty variable, able to perform semantic tasks which the other cannot} \citep[][p.~103]{shackle1961}.
\end{quote}
Unlike in other applications of imprecise probability, the imprecision that enters into the picture here is {\em not} the result of the data analyst's inability or unwillingness to precisely specify a statistical model, etc., although partially identified models \citep[e.g.,][]{manski.book} could be a source of additional imprecision.  Instead, it has been shown that a certain degree of imprecision is necessary for inference to be {\em valid} in a specific statistically-relevant sense that we explain below.  Moreover, it has also been shown that a possibility measure is the ``right'' imprecise probability model for quantifying this unique form of imprecision, as opposed to more general belief functions, lower previsions, etc.~that are designed for modeling ignorance-driven imprecision, or Knightian uncertainty.  Below is a quick summary of the IM construction and explanation of the claims just made.  

The IM construction summarized here is the likelihood-driven construction, recently advanced in \citet{ryanpp2}, which is based on a holistic view of the statistical inference problem, i.e., it aims to answer the question {\em what does the data have to say about $\Theta$?}  This is our preferred construction, but it is worth pointing out here that this is not the only available option.  Appendix~\ref{app:test.im} outlines an alternative construction, following \citet{imchar}, that starts with a specific hypothesis testing problem to be solved; since this starts with a specific rather than general question about $\Theta$ to answer, we consider this test-based construction ``less holistic'' than the likelihood-based construction mentioned above and described in more detail below.  The two different constructions have their advantages and, in particular, the test-based construction helps us make connections to earlier attempts to achieve probativeness. 

The likelihood-based construction here is motivated by the probability-to-possibility transform in, e.g., \citet{dubois2004}, \citet{dubois2006}, \citet{hose.hanss.2021}, and \citet{hose2022thesis}, and is driven by the likelihood function of the posited model.  Let $\theta \mapsto L_y(\theta)$ denote the likelihood function for $\Theta$ based on data $y$, and define the {\em relative likelihood}
\[ R(y, \theta) = \frac{L_y(\theta)}{L_y(\hat \theta_y)}, \quad \theta \in \TT,\]
where $\hat \theta_y$ is a maximum likelihood estimator.  This relative likelihood has been used by several authors \citep[e.g.,][]{shafer1982, wasserman1990b, denoeux2014} to construct a plausibility function for $\Theta$.  To achieve the desired performance guarantees, however, we need to go one step further.  Next, define the function 
\begin{equation}
\label{eq:IMcontour}
\pi_y(\theta) = \prob_{\theta}\{ R(Y,\theta) \leq R(y,\theta)\}, \quad \theta \in \TT. 
\end{equation}
This is the p-value function for a likelihood ratio test, but it is also a possibility contour, since it attains a maximum value of 1 at $\hat \theta_y$. Then the corresponding IM for $\Theta$ is the possibility and necessity measure pair determined by the contour in \eqref{eq:IMcontour}, i.e., 
\begin{equation}
\label{eq:IMposs}
\uPi_y(H) = \sup_{\theta \in H} \pi_y(\theta) \quad \text{and} \quad \lPi_y(H) = 1 - \uPi_y(H^c), \quad H \subseteq \Theta. 
\end{equation}
It is easy to verify from the above definition that 
\begin{equation}
\label{eq:order}
\lPi_y(H) \leq \uPi_y(H), \quad \text{for all $H \subseteq \TT$ and all $y \in \YY$}, 
\end{equation}
which explains the lower- and upper-bar notation and why, in some cases, these are referred to as lower and upper probabilities.  

A feature of the IM's output $(\lPi_y, \uPi_y)$, or necessity-possibility measure pairs more generally, is that there are some inherent constraints on the values that $\lPi_y(H)$ and $\uPi_y(H)$ can take for a given $H$.  In particular, 
\begin{equation}
\label{eq:zero.one}
\uPi_y(H) < 1 \implies \lPi_y(H) = 0 \quad \text{and} \quad \lPi_y(H) > 0 \implies \uPi_y(H) = 1. 
\end{equation}
The intuition \citep[cf.~][]{shackle1961} behind this is as follows: if there is any doubt about $H$, so that $\uPi_y(H) < 1$, then there cannot be even a shred of support for $H$ and, similarly, if there is a shred of support for $H$, so that $\lPi_y(H) > 0$, then there can be no doubt that $H$ is possible.  These constraints arise from the definition \eqref{eq:IMposs} that determines $\uPi_y$ by optimizing $\pi_y$. To some these constraints might be a restriction and, indeed, there are some non-statistical applications in which this structure, what Shafer calls {\em consonance}, would not be appropriate. However, our views align with those of Shafer:
\begin{quote}
{\em ... specific items of evidence can often be treated as consonant, and there is at least one general type of evidence that seems well adapted to such treatment.  This is inferential evidence---the evidence for a cause that is provided by an effect} \citep[][p.~226]{shafer1976}.
\end{quote}
Statistical inference problems, like those in consideration here, are of the form Shafer is referring to, so the adoption of a consonant belief structure for quantifying uncertainty is quite natural.  In addition, the particular property (Theorem~\ref{theorem:IMvalidity}) that we need to ensure both performance and probativeness can only be satisfied if the IM has this consonant structure, i.e., if its output is a necessity-possibility measure pair.



Suppose interest is in some feature $\Phi = f(\Theta)$, where $f$ is a function defined on $\TT$. We can easily obtain a marginal IM for $\Phi$ from that for $\Theta$ using possibility calculus.  Indeed, the extension principle of \citet{Zadeh1975} gives the possibility contour function for $\Phi$:
\begin{equation}
\label{eq:featureIMcontour}
\pi_{y}^f(\phi) = \sup_{\theta: f(\theta) = \phi}\pi_{y}(\theta), \quad \phi \in f(\TT).
\end{equation}
Using this contour, just as before, we can obtain the possibility and necessity measure pair that determines the marginal IM for $\Phi$:
\[ \uPi_{y}^f(K) = \sup_{\phi \in K} \pi_{y}^f(\phi) \quad \text{and} \quad \lPi_y^f(K) = 1 - \uPi_y^f(K^c), \quad K \subseteq f(\TT).\] 
In Section~\ref{S:ps} below, we demonstrate that this marginal IM inherits the desirable properties of the original IM construction for $\Theta$. This characteristic is crucial as it safeguards data analysts from unintentional errors in downstream or subsequent inferences. But this is not the only marginalization strategy. The one above is consistent with our holistic approach to/perspective on statistical inference, but the price paid for its broad flexibility is efficiency.  If it were known that {\em only} the feature $\Phi = f(\Theta)$ is of interest, then a different and more efficient marginalization strategy can be carried out, one that is tailored specifically to that feature; see, e.g., \citet{ryanpp2}. 

A relevant question is: {\em how to interpret the IM output?}  Since the IM output corresponds to an imprecise probability, all the standard interpretations of imprecise probabilities can be taken, e.g., degrees of belief, bounds on prices for gambles, etc.  In particular, for fixed $y$, the IM output $(\lPi_y, \uPi_y)$ defines a coherent lower and upper probability for $\Theta$.  While IMs are compatible with the theory developed in \citet{walley1991}, that is not the perspective we take here.  We should also emphasize that there does not exist an underlying ``true conditional probability distribution of $\Theta$, given $Y=y$,'' so it would not make sense to think of these imprecise probabilities as bounds on probabilities on some ``true'' probabilities, or that this ``true'' probability distribution is contained in the IM output's credal set.  Instead, we see the IM output as facilitating what we call {\em possibilistic reasoning}---a sort of unidirectional version of the more familiar, bidirectional probabilistic reasoning.  That is, in the latter case, both small and large probability values carry inferential weight whereas, in the former case, only small possibility values and large necessity values carry inferential weight.  To conclude that data $y$ supports a hypothesis $H$, it is not enough that $\uPi_y(H)$ is large; we need $\lPi_y(H)$ to be large, which implies that $\uPi_y(H)$ is also large, by \eqref{eq:order}.  In fact, \citet[][Ch.~11]{shafer1976} refers to $H \mapsto \lPi_y(H)$ as a {\em support function}, which is how we propose to use it here.  This is just a mathematization of the commonsense notion that a lack of support for $H$ does not imply support for $H^c$.  

A certain mathematical structure is not enough to give the IM output the aforementioned ``inferential weight.''  Following Cournot's principle \citep[e.g.,][]{shafer2007}, this requires establishing that true hypotheses tend not to be assigned small possibility values; equivalently, false hypotheses tend not to be assigned large necessity values.  The following basic but important result establishes a key connection between the IM and the ``real world'' (relative to the posited model), through the magnitudes of its possibility assignments to true hypotheses.  This will serve as the jumping off point for both the performance- and probativeness-specific properties in the coming section.  

\begin{thm}
\label{theorem:IMvalidity}
An IM for $\Theta$ whose output takes the form of a necessity-possibility measure pair as described above, determined by a contour function $\pi_y(\theta)$ as in \eqref{eq:IMcontour}, is {\em strongly valid} in the sense that 
\begin{equation}
\label{eq:valid}
\sup_{\Theta \in \TT} \prob_\Theta\{ \pi_Y(\Theta) \leq \alpha \} \leq \alpha, \quad \alpha \in [0,1]. 
\end{equation}
\end{thm}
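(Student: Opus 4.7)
The proof plan is to recognize that the contour function $\pi_y(\theta)$ defined in \eqref{eq:IMcontour} is, by construction, the p-value of a likelihood-ratio-style test, so strong validity reduces to the familiar fact that p-values are stochastically bounded below by $\unif(0,1)$ under the null. Concretely, I would fix an arbitrary $\Theta \in \TT$ and work under the sampling distribution $\prob_\Theta$. Set $T = R(Y,\Theta)$ and let $F_\Theta$ denote the $\prob_\Theta$-CDF of $T$, i.e., $F_\Theta(t) = \prob_\Theta\{R(Y,\Theta) \leq t\}$. By \eqref{eq:IMcontour}, evaluating the contour at the true parameter gives $\pi_Y(\Theta) = F_\Theta\bigl(R(Y,\Theta)\bigr) = F_\Theta(T)$, so the random variable $\pi_Y(\Theta)$ is precisely the probability-integral transform of $T$ by its own CDF.

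The key step is then to invoke the standard lemma that, for any real-valued random variable $T$ with CDF $F_\Theta$, one has
\begin{equation*}
\prob_\Theta\{ F_\Theta(T) \leq \alpha \} \leq \alpha \quad \text{for all } \alpha \in [0,1],
\end{equation*}
with equality in the continuous case. This is a one-line consequence of the definition of the generalized inverse $F_\Theta^{-1}(\alpha) = \inf\{t : F_\Theta(t) \geq \alpha\}$: the event $\{F_\Theta(T) \leq \alpha\}$ is contained in $\{T \leq F_\Theta^{-1}(\alpha)\}$ (up to a $\prob_\Theta$-null set arising from flat pieces of $F_\Theta$), which in turn has probability at most $\alpha$ by the definition of $F_\Theta^{-1}$. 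Applying this lemma to $T = R(Y,\Theta)$ gives $\prob_\Theta\{\pi_Y(\Theta) \leq \alpha\} \leq \alpha$.

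The last step is cosmetic: since the bound just derived holds for every fixed $\Theta \in \TT$ with the same constant $\alpha$, taking the supremum over $\Theta$ preserves the inequality and yields \eqref{eq:valid}. I do not expect a genuine obstacle here; the only mild subtlety is the non-continuous case, where $F_\Theta(T)$ is merely stochastically larger than $\unif(0,1)$ rather than exactly uniform, which is exactly why the theorem is stated as an inequality rather than an equality. In particular, the proof never uses any structure of the likelihood beyond the fact that $R(Y,\theta)$ is a measurable statistic, so the argument applies verbatim to any contour built via the probability-to-possibility transform of a real-valued pivot, which is conceptually the reason this validity property is a generic feature of the likelihood-driven IM construction.
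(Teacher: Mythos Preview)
Your argument is correct. The paper does not actually supply a proof of Theorem~\ref{theorem:IMvalidity}; it states the result and immediately remarks that ``if one is thinking in terms of p-values, then the result \ldots\ will look familiar,'' treating it as a standard fact. Your approach---recognizing $\pi_Y(\Theta) = F_\Theta(R(Y,\Theta))$ as the probability integral transform of the relative-likelihood statistic and invoking the stochastic-dominance property $\prob\{F(T) \leq \alpha\} \leq \alpha$---is exactly the p-value reasoning the paper alludes to, so your write-up is fully in line with the authors' intent.

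One minor polish: your parenthetical about the containment $\{F_\Theta(T) \leq \alpha\} \subseteq \{T \leq F_\Theta^{-1}(\alpha)\}$ holding ``up to a $\prob_\Theta$-null set arising from flat pieces'' is slightly off in its diagnosis. The cleanest route is to note that $\{t : F_\Theta(t) \leq \alpha\}$ is, by monotonicity of $F_\Theta$, an interval of the form $(-\infty,c)$ or $(-\infty,c]$; in the first case $\prob_\Theta\{T < c\} = F_\Theta(c^-) \leq \alpha$, and in the second $\prob_\Theta\{T \leq c\} = F_\Theta(c) \leq \alpha$. This avoids any appeal to null sets and handles jumps and flats uniformly. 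Your closing observation---that nothing about the likelihood is used beyond $R(Y,\theta)$ being a measurable real statistic---is a nice point and is consistent with the paper's broader framing of the construction as a probability-to-possibility transform.
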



If one is thinking in terms of p-values, then the result in Theorem~\ref{theorem:IMvalidity} will look familiar.  It also closely resembles what \citet{walley2002} calls the {\em fundamental frequentist principle}, so, despite its familiarity, this result must be important.  We will discuss below, in Section~\ref{S:ps}, the striking implications this has when it comes to the IM's performance and probativeness properties.

\section{Two P's in a possibility-theoretic pod}
\label{S:ps}

\subsection{Performance}
\label{SS:perf}

As discussed above, what modern statisticians value most is {\em performance}, i.e., that the procedures developed for the purpose of making inference-related decisions (e.g., accept or reject a hypothesis) have frequentist error rate control guarantees.  These error control properties are genuinely important: if statistical methods are not even reliable, then it is difficult to imagine how they could help advance science.  This explains why even the Bayesians are concerned with frequentist properties.  

Most of the previous IM developments have focused primarily on the performance aspect, so the results presented below are not new.  For completeness, however, we give a quick summary of the available results and offer some new perspectives.  The two standard procedures found in the statistics literature are hypothesis testing and confidence set procedures.  Corollary~\ref{cor1} below describes the corresponding procedures derived from the IM output and the error rate control guarantees they enjoy.  

\begin{cor}\label{cor1}
Consider an IM for $\Theta$ that, for $Y=y$, returns the necessity-possibility measure pair $(\lPi_y, \uPi_y)$ determined by a possibility contour function $\pi_y$ as described in Section~\ref{SS:ims}.  Then the following properties hold for all $\alpha \in [0,1]$. 
\begin{enumerate}[(a)]
\item For any given $H$, the test ``reject $H$ if and only if $\uPi_Y(H) \leq \alpha$'' has frequentist Type~I error probability no more than $\alpha$, i.e., 
\[ 
\sup_{\Theta \in H} \prob_\Theta\{ \uPi_Y(H) \leq \alpha \} \leq \alpha. \]
\item The set $C_\alpha(Y) = \{\theta: \pi_Y(\theta) > \alpha\}$ has frequentist coverage probability at least $1-\alpha$, making it a $100(1-\alpha)$\% confidence set. That is,
\[ 
\sup_{\Theta \in \TT} \prob_\Theta\{ C_\alpha(Y) \not\ni \Theta \} \leq \alpha. \]
\end{enumerate}
\end{cor}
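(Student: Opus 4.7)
My plan is to deduce both parts of Corollary~\ref{cor1} as essentially immediate consequences of the strong validity property in Theorem~\ref{theorem:IMvalidity}, using only the definition of $\uPi_y$ as the supremum of $\pi_y$ over the hypothesis. The core observation driving both arguments is that when the true parameter $\Theta$ lies in a set $H$, the possibility assigned to $H$ is an upper bound on the contour value at $\Theta$; hence a small $\uPi_Y(H)$ forces a small $\pi_Y(\Theta)$, an event already controlled by validity.

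For part (a), I would fix a hypothesis $H$ and a value $\Theta \in H$. From \eqref{eq:IMposs} I have $\uPi_Y(H) = \sup_{\theta \in H} \pi_Y(\theta) \geq \pi_Y(\Theta)$, so the event $\{\uPi_Y(H) \leq \alpha\}$ is contained in $\{\pi_Y(\Theta) \leq \alpha\}$. Monotonicity of $\prob_\Theta$ gives
\[
\prob_\Theta\{\uPi_Y(H) \leq \alpha\} \le \prob_\Theta\{\pi_Y(\Theta) \leq \alpha\},
\]
and taking a supremum over $\Theta \in H$ on both sides, together with the bound \eqref{eq:valid} (whose supremum over $\TT$ dominates the supremum over $H$), yields the claimed Type~I error bound.

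For part (b), the argument is even more direct. By construction $C_\alpha(Y) = \{\theta : \pi_Y(\theta) > \alpha\}$, so $\{C_\alpha(Y) \not\ni \Theta\}$ is exactly the event $\{\pi_Y(\Theta) \leq \alpha\}$. Therefore
\[
\sup_{\Theta \in \TT} \prob_\Theta\{C_\alpha(Y) \not\ni \Theta\} = \sup_{\Theta \in \TT} \prob_\Theta\{\pi_Y(\Theta) \leq \alpha\} \leq \alpha,
\]
by Theorem~\ref{theorem:IMvalidity}, establishing the $1-\alpha$ coverage claim.

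There is really no obstacle here, since the corollary is essentially a reformulation of strong validity in the language of tests and confidence sets. The only subtlety I would flag is that part (a) uses the inequality $\uPi_Y(H) \geq \pi_Y(\Theta)$ rather than equality, so the test may in general be conservative; the elevated supremum step is what makes this irrelevant to the stated Type~I bound. No additional structure beyond the consonance of $(\lPi_y, \uPi_y)$ and the contour-supremum definition in \eqref{eq:IMposs} is needed, which is consistent with the paper's remark that the consonant structure is exactly what powers these downstream guarantees.
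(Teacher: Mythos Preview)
Your proof is correct and follows essentially the same approach as the paper: for part~(a) you use the inequality $\uPi_Y(H) \geq \pi_Y(\Theta)$ when $\Theta \in H$ to get event containment and then apply strong validity, and for part~(b) you identify $\{C_\alpha(Y) \not\ni \Theta\}$ with $\{\pi_Y(\Theta) \leq \alpha\}$ and apply strong validity directly. The paper's argument is the same, only slightly more terse.
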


\begin{proof}
Both of these results are immediate consequences of \eqref{eq:valid}.  For Part~(a), note that monotonicity of the possibility measure implies that $\uPi_Y(H) \geq \pi_Y(\Theta)$ for any $\Theta \in H$.  Therefore, combined with \eqref{eq:valid}, we get 
\begin{equation}
\label{eq:valid.up}
\prob_\Theta\{ \uPi_Y(H) \leq \alpha \} \leq \prob_\Theta\{ \pi_Y(\Theta) \leq \alpha \} \leq \alpha. 
\end{equation}
For Part~(b), observed that $C_\alpha(Y) \not\ni \Theta$ if and only if $\pi_{Y}(\Theta) \leq \alpha$.  And since the latter event has probability $\leq \alpha$ by \eqref{eq:valid}, so too does the former.  
\end{proof}

We are not aware of Fisher ever making such a statement, but we can imagine that Fisher's disdain for the Neyman-style behavioral approach to statistical inference at least partially stemmed from the fact that the frequentist error rate control properties would be immediate consequences of the kind of calibration needed to make his ``logical disjunction'' argument sound.  That is, if Fisher's necessary calibration \eqref{eq:valid} is satisfied, then Neyman's error rate control is a corollary.  This is effectively what Corollary~\ref{cor1} shows and, it is in this sense that a strongly valid IM offers performance guarantees.  

Recall that we explained in Section~\ref{SS:ims} how uncertainty about a relevant feature $\Phi = f(\Theta)$ could be quantified based solely on the IM for $\Theta$.  An immediate consequence of Corollary~\ref{cor1} and the possibility calculus---the extension principle specifically---is that the corresponding test and confidence set procedures for making decisions pertaining to $\Phi$ inherit the performance guarantees that the IM for $\Theta$ enjoys.  

The kind of performance properties that the IM achieves might remind some readers of confidence distributions \citep[e.g.,][]{xie.singh.2012, schweder.hjort.2002, nadarajah.etal.2015}.  For a scalar parameter $\Theta$, a confidence distribution is a data-dependent cumulative distribution function $\theta \mapsto G_y(\theta)$ such that 
\[ \sup_{\Theta \in \TT} \prob_\Theta\{ G_Y(\Theta) \leq \alpha \} \leq \alpha, \quad \alpha \in [0,1]. \]
From here, one can construct hypothesis tests and confidence sets similar to how we did with the IM output above in Corollary~\ref{cor1}; see the above references for details.  However, the testing error rate control can only be achieved for hypotheses $H$ that take the form of half-lines, e.g., $(-\infty, \theta]$ or $[\theta, \infty)$.  For other kinds of hypotheses, e.g., bounded intervals, the frequentist error rates might not be controlled.  Corollary~\ref{cor1} shows that the IM controls error rates for any hypotheses $H$, and not just for scalar $\Theta$.  For a certain class of models, Fisher's fiducial distribution and the default-prior Bayes posterior distribution are confidence distributions, and \citet{Martin2023} characterizes these as members of the IM output's credal set.  This characterization explains why a confidence distribution's probability assignments are calibrated only in the tails, i.e., for half-line hypotheses.  



\subsection{Probativeness}
\label{SS:prob}

The current literature on IMs has focused largely on the performance-related questions as in the previous subsection.  This is understandable given that performance is the top priority for modern statisticians and that other performance-related features (e.g., Theorem~\ref{theorem:IMvalidity}) are crucial to Fisher's brand of inductive inference.  But we claim that the IMs described above have even more to offer, so the goal of this section is to unearth those previously underappreciated features of the IM framework.  

That the IM output offers more than what has been discussed in the extant literature is obvious: the focus has been on the performance of derived statistical methods, which only involves certain features, such as the contour function, its level sets, and the possibility measure evaluated at pre-determined hypotheses.  This is just a small fraction of what a full-blown imprecise probability distribution---or even a necessity-possibility measure pair---can do.  What we are particularly interested in here is the use of the IM output to {\em probe}, that is, to naturally proceed with the analysis, to dig deeper, after an answer to the first (often trivial) question has been given.  On the performance side, one thinks of the test in Corollary~\ref{cor1}(a) as a one-and-done prospect: if $H$ is rejected, then infer $H^c$ and pack up to go home.  In reality, such a test is just the first step in the analysis, so we ought to consider the follow-up questions and analyses too.  This is especially true in the IM case because there is an opportunity to tap into those  aspects of the necessity-possibility measure pair that are currently being ignored.  


Consider the {\em common} situation where the data $y$ is incompatible with the hypothesis $H$ in the sense that $\uPi_y(H)$ is small; the other case of probing when $\uPi_y(H)$ is not small is more challenging and will be discussed in detail in Section~\ref{S:mayo}.  We emphasized ``common'' because the initial $H$, or {\em null hypothesis}, is often an overly simplistic scientific default that isn't expected to be true---otherwise, the resources needed to collect the data $y$ probably would not have been invested.  If $\uPi_y(H)$ is small, then we know by the discussion in Section~\ref{SS:ims} that $\uPi_y(H^c) = 1$ and, consequently, there is ample room for certain sub-hypotheses in $H^c$ to have non-trivial necessity values, i.e., $\lPi_y(A) > 0$ for some $A \subseteq H^c$.  We follow Shafer and interpret $\lPi_y$ as a measure of support, so this probing exercise is about finding sub-hypotheses whose truthfulness is directly supported by data $y$.  To fix ideas, consider the case where $\uPi_y(H)$ is small and $A$ is a fixed sub-hypothesis contained in $H^c$.  There are roughly two cases worth exploring:
\begin{itemize}
\item if $\lPi_y(A)$ is large, then we can conclude that $A$ is supported by data $y$, and 
\vspace{-2mm}
\item if $\lPi_y(A)$ is small, then data $y$ is mostly uninformative about $A$ and no conclusion about $A$ is warranted; that is, both $A$ and $A^c$ are compatible with $y$ or, equivalently, the ``don't know probability'' \citep{dempster2008}, $\uPi_y(A) - \lPi_y(A)$, is large. 
\end{itemize} 
This process can be repeated, in principle, for all sub-hypotheses $A$ of $H$.  The data analyst will find some $A$s that are supported by data and others about which the data are mostly uninformative.  Stitching all of these $A$-specific analyses together creates a complete IM tapestry that details what the data can reliably say about $\Theta$.  

\citet[][p.~13 and elsewhere]{mayo.book.2018} argues at a high level that ``probabilism'' does not imply probativeness.  But the shortcoming of probability as a tool for probing also becomes clear here in the mathematical details.  Take, for example, a confidence distribution as discussed briefly above.  Since the probabilities assigned by the confidence distribution to $H$ and $H^c$, respectively, must sum to 1, we find that a lack of support for one implies support for the other, which we know is logically incorrect---this is exactly why the probing task is challenging.  Therefore, imprecision seems necessary to achieve the probing goal, and below we will argue why the IM framework suggested here is the appropriate formulation. Although Mayo does not specifically mention imprecise probabilities,\footnote{In the footnote on page 67, Mayo presents Popper's argument for how there can be logical contradictions if Carnap's ``degree of confirmation'' is measured by something additive like a probability.} we will show below that the measure she proposes is, in fact, non-additive and has connections with our proposed IM solution in the contexts where she described it; see Section~\ref{S:mayo}. 

That the IM framework facilitates probing as described above does not directly imply that the probing process we described is {\em reliable}.  We do get some comfort from \eqref{eq:valid.up}, i.e., the possibilities assigned to true hypotheses do not tend to be small.  Thanks to the duality $\lPi_y(H) = 1-\uPi_y(H^c)$ between the two measures, this also implies  
\begin{equation}
\label{eq:valid.lo}
\sup_{\Theta \not\in H} \prob_\Theta\{ \lPi_Y(H) \geq 1-\alpha \} \leq \alpha, \quad \text{all $\alpha \in [0,1]$, all $H \subseteq \TT$}. 
\end{equation}
That is, the necessities (or support) assigned to false hypotheses do not tend to be large.  It is problematic, however, that probing is dynamic and there is no way to predict what kind of follow-up questions the data analyst might want to ask.  In fact, those questions might be determined by the data itself, so the aforementioned comfort---derived from hypothesis-wise error rate control---is not all that comforting. \citet[][Sec.~4.2]{mayocox2006} discuss these and related issues concerning selection.  Fortunately, there are stronger consequences of Theorem~\ref{theorem:IMvalidity} that are not captured by \eqref{eq:valid.up} and have not been elucidated in the previous works on IMs and their performance properties.  

\begin{cor}
\label{cor2}
An IM for $\Theta$ whose output $(\lPi_y,\uPi_y)$ is determined by a possibility contour $\pi_y$ via \eqref{eq:IMposs} has the following {\em uniform validity} property:  
\begin{equation}
\label{eq:uniform}
\sup_{\Theta \in \TT} \prob_\Theta\{ \text{$\uPi_Y(H) \leq \alpha$ for some true $H$, i.e., $H \ni \Theta$} \} \leq \alpha, \quad \alpha \in [0,1]. 
\end{equation}
Equivalently, in terms of necessity/support:
\[ \sup_{\Theta \in \TT} \prob_\Theta\{ \text{$\lPi_Y(H) \geq 1-\alpha$ for some false $H$, i.e., $H \not\ni \Theta$} \} \leq \alpha, \quad \alpha \in [0,1]. \]
\end{cor}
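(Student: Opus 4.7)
The plan is to collapse the apparently stronger \emph{uniform} statement, which quantifies over all true hypotheses $H\ni\Theta$, to the pointwise validity statement \eqref{eq:valid} already established in Theorem~\ref{theorem:IMvalidity}. The key observation is that, although the event in \eqref{eq:uniform} mentions an arbitrary $H$, the possibility measure $\uPi_y$ is defined as a supremum of the contour $\pi_y$ over $H$, so for any $\Theta\in H$ we have the simple pointwise bound $\pi_Y(\Theta)\le \uPi_Y(H)$. Thus, I intend to show that the two events
\[
E_1 \;=\; \{\uPi_Y(H)\le\alpha \text{ for some } H\ni\Theta\}
\quad\text{and}\quad
E_2 \;=\; \{\pi_Y(\Theta)\le\alpha\}
\]
are in fact identical, after which Theorem~\ref{theorem:IMvalidity} finishes the argument.

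For the inclusion $E_1\subseteq E_2$: if there exists a hypothesis $H$ containing $\Theta$ with $\uPi_Y(H)\le\alpha$, then by monotonicity of the sup,
\[
\pi_Y(\Theta)\;\le\;\sup_{\theta\in H}\pi_Y(\theta)\;=\;\uPi_Y(H)\;\le\;\alpha,
\]
placing the outcome in $E_2$. For the reverse inclusion $E_2\subseteq E_1$: whenever $\pi_Y(\Theta)\le\alpha$, simply choose the singleton witness $H=\{\Theta\}$, which is trivially a true hypothesis and satisfies $\uPi_Y(\{\Theta\})=\pi_Y(\Theta)\le\alpha$. With $E_1=E_2$ in hand, \eqref{eq:valid} directly yields $\prob_\Theta(E_1)=\prob_\Theta(E_2)\le\alpha$, uniformly in $\Theta\in\TT$, giving the first claim.

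The equivalent necessity/support form follows by conjugacy. Applying the duality $\lPi_y(H)=1-\uPi_y(H^c)$ in \eqref{eq:IMposs}, the event $\{\lPi_Y(H)\ge 1-\alpha\}$ is the same as $\{\uPi_Y(H^c)\le\alpha\}$, and the condition $H\not\ni\Theta$ translates to $H^c\ni\Theta$. Thus the re-indexing $H'=H^c$ converts the second display in the corollary into the first, and the same bound $\le\alpha$ applies.

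The potential obstacle one might anticipate is a measurability issue with the random family of hypotheses (a sup over all $H\ni\Theta$ of an event), but this never materializes: the argument above shows the uncountable-looking event collapses to a single measurable event $\{\pi_Y(\Theta)\le\alpha\}$ with no union or selection required. So the only real content beyond Theorem~\ref{theorem:IMvalidity} is the structural observation that in a consonant (necessity-possibility) output, the upper probability of \emph{any} true hypothesis is pinned down from below by the contour at the true parameter—exactly the feature that the uniform validity property is exploiting.
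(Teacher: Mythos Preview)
Your proof is correct and follows essentially the same approach as the paper: the paper's proof states that the event in \eqref{eq:uniform} holds if and only if $\pi_Y(\Theta)\le\alpha$, and then invokes \eqref{eq:valid}. You have simply spelled out both directions of that equivalence (monotonicity for one, the singleton witness for the other) and added the duality argument for the necessity form, which the paper leaves implicit.
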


The proof is almost immediate so we get it out of the way now. But the reader might want to first skip ahead to the discussion below to better understand the result.

\begin{proof}[Proof of Corollary~\ref{cor2}]
The claim \eqref{eq:uniform} follows from \eqref{eq:valid} and the fact that there exists an $H$ such that $H \ni \Theta$ and $\uPi_Y(H) := \sup_{\theta \in H} \pi_Y(\theta) \leq \alpha$ if and only if $\pi_Y(\Theta) \leq \alpha$. 
\end{proof}

The ``for some true $H$'' statement in \eqref{eq:uniform} is potentially confusing so here is a more detailed explanation.  The reader is surely accustomed to interpreting ``for some'' in terms of a union operation, and that is precisely the interpretation we have in mind here.  That is, the event in \eqref{eq:uniform} can be written as 
\[ \bigcup_{H \subseteq \TT: H \ni \Theta} \{ \uPi_Y(H) \leq \alpha \}. \]
This is clearly a much larger event than $\{\uPi_Y(H) \leq \alpha\}$ for a fixed $H$ and, therefore, the probability bound in \eqref{eq:uniform} is significantly stronger than the analogous bound in \eqref{eq:valid.up}.  Aside from being mathematically stronger, there are key practical implications of this.  The result implies that no matter how the data analyst proceeds with his/her probing, the probability that even one of the IM's suggestions is misleading---small $\uPi_Y$ to a true hypothesis or large $\lPi_Y$ to a false hypothesis---is controlled at the specified level.  To put the result more in line with the explanation of probing given above, consider the special case of a fixed $H$ and then a collection of sub-hypotheses $\{A_r: r \geq 1\}$ of $H^c$.  So, if $\Theta \in H$, then $\Theta \in A_r^c$ for $r \geq 1$. 
 Then Corollary~\ref{cor2} implies that 
\[ \sup_{\Theta \in H} \prob_\Theta\{ \text{$\uPi_Y(H) \leq \alpha$ or $\lPi_Y(A_1) \geq 1-\alpha$ or $\lPi_Y(A_2) \geq 1-\alpha$ or...} \} \leq \alpha, \]
i.e., the probability that the IM points the data analyst in the wrong direction concerning even one of these hypotheses is no more than $\alpha$.  Moreover, that union perspective reveals that the result remains true even if the hypotheses chosen in the probing step happen to be data-dependent in some way that would be too complicated for those of us here on the data analysis sidelines to specify in advance.  So, the uniformity baked into the result of Corollary~\ref{cor2} is exactly what is needed to ensure that the commonsense probing that the IM framework suggests can indeed be carried out reliably.

\section{Comparison with Mayo's severity}
\label{S:mayo}

\subsection{Background}
\label{SS:mayo.back}

In Section~\ref{S:intro}, we mentioned recent efforts by statisticians to supplement the standard significance tests, etc.~with measures designed to {\em probe} for hypotheses that are supported by the data.  In particular, what \citet{mayo.book.2018} refers to as {\em severe testing} aims to capture this notion of probing.  We do not assume that the reader is familiar with Mayo's work, so here we give a relatively brief introduction.  Modulo some minor changes in notation and terminology, here is how \citet[][p.~23]{mayo.book.2018} explains her notion of severity:
\begin{quote}
{\em Severity (weak):} If data $y$ agree with a claim $H$ but the method was practically incapable of finding flaws with $H$ even if they exist, then $y$ is poor evidence of $H$. \par 
{\em Severity (strong):} If $H$ passes a test that was highly capable of finding flaws or discrepancies from $H$, and yet none or a few are found, then the passing result, $y$, is an indication of, or evidence for, $H$. 
\end{quote}
At least conceptually, we think most readers would find these basic severity principles uncontroversial.  The idea goes back to Popper's falsificationist program which says science progresses by subjecting the status quo to severe tests, tests that are capable of detecting departures from the status quo.  Hypotheses that are able to withstand a series of severe tests have ``proved their mettle'' \citep[][p.~10]{popper1959}.  The challenge in the context of statistical hypothesis testing is that Popper's very strict standard for falsification cannot be met based on a (necessarily limited) set of empirical data $y$.  This is where the work of Fisher, Neyman--Pearson, Mayo--Cox, and others comes in.  

As we attempt to dig deeper, to get beyond just a high-level conceptual understanding of these ideas in hopes of putting them into practice, things become less clear.  \citet[][p.~148--150]{mayo.book.2018} presents her two-part {\em Principle of Frequentist Evidence} (FEV), whose starting point is a particular null hypothesis $H_0$ and a given procedure for testing that hypothesis based on data $y$.  We present here an inconsequentially modified version of FEV from
\citet[][p.~82--84]{mayocox2006}:
\begin{quote}
{\em FEV 1.} $y$ is (strong) evidence against $H_0$, i.e., (strong) evidence of a discrepancy from $H_0$, if and only if, where $H_0$ a correct description of the mechanism generating $y$, then, with (very) high probability, this would have resulted in a less discordant result than is exemplified in $y$. \par 

{\em FEV 2.} A moderate p-value is evidence of the absence of a discrepancy $\delta$ from $H_0$, only if there is high probability the test would have given a worse fit with $H_0$ (i.e., smaller p-value) were a discrepancy $\delta$ to exist. 
\end{quote}
As \citet[][p.~149]{mayo.book.2018} admits, ``this sounds wordy and complicated.'' The complication, we claim, stems from trying to justify fixed-data conclusions based on frequentist-style performance probabilities. On the one hand, FEV1 is familiar even if it is not easy to follow: this is just a different way to say that a small p-value is interpreted as evidence in $y$ against $H_0$.  In this case, the data analyst is in a situation like that described in Section~\ref{SS:prob} and the probing question concerns support for subsets of $H_0^c$.  On the other hand, FEV2 goes in an unfamiliar---but important, challenging, and exciting---direction, towards something far beyond what one finds in the cookbook-style NHST literature.  For both parts of FEV, something more than just the p-value associated with the pair $(y,H_0)$ is needed, something that can probe for genuine support.  What this ``something more'' might look like is discussed below. 

Note that FEV1 and FEV2 are ``if and only if'' and ``only if'' claims, respectively.  The point is that effectively the only way we can interpret incompatibility or discordance between $y$ and $H_0$ is as evidence against $H_0$, but compatibility alone between $y$ and $H_0$ need not be evidence supporting $H_0$.  There are many reasons why a p-value might be large, reasons that do not indicate genuine support in the data for the hypothesis.  This aligns with our aforementioned commonsense understanding, which draws a first connection between what Mayo aims to achieve and what our proposed IM offers.  

After a preview in Chapter~3, Chapter~5 of \citet{mayo.book.2018} lays out some details of her proposal for what the aforementioned ``something else'' ought to look like.  She explains that the origins of her idea are in the early works on power analysis, in particular, \citet{Neyman1955} and insights sprinkled throughout \citet{cox_2006}.  Like p-values can be interpreted as an {\em attained significance level}---we used this interpretation in the procedure-driven IM construction, in particular, Equation~\eqref{eq:contour.test} in Appendix~\ref{app:test.im1}---one can consider a corresponding {\em attained power}.  Mayo's presentation focuses solely on simple, albeit important/common testing scenarios, so we will do the same here, both for concreteness and to avoid potentially misrepresenting Mayo's proposal.  

Suppose $\Theta$ is a scalar parameter and, without loss of generality, consider the null hypothesis $H_0: \Theta \leq \theta_0$, for a fixed $\theta_0$; the opposite one-sided hypothesis $H_0: \Theta \geq \theta_0$ can be handled similarly.  Let $S(Y,\theta_0)$ be a test statistic such that large values indicate incompatibility between $Y$ and $H_0$ and suggest rejection.  More specifically, a test that controls the Type~I error at a specified significance level $\alpha \in (0,1)$ rejects $H_0$ if and only if $S(Y,\theta_0) \geq s_\alpha$, where the critical value $s_\alpha$ is defined to satisfy the condition
\[ \sup_{\theta \leq \theta_0} \prob_\theta\{ S(Y, \theta_0) \geq s_\alpha \} = \alpha. \]
It will often be the case that the probability in the above display is increasing in $\theta$, which implies the supremum is attained at the boundary $\theta_0$.  We will assume that this is the case here, so $s_\alpha$ satisfies $\prob_{\theta_0}\{ S(Y,\theta_0) \geq s_\alpha \} = \alpha$. 
Note that the Type~I error probability is a property of the test procedure and, therefore, does not depend on the observed data $Y=y$.  The p-value, or attained significance level, replaces the fixed critical value $s_\alpha$ with the value of the test statistic computed at the observed data $y$:
\begin{equation}\label{eq:pval.sev}
\pval^{\leq}_y(\theta_0) = \prob_{\theta_0}\{ S(Y,\theta_0) \geq S(y,\theta_0)\} \}.
\end{equation}
The superscript ``$\leq$'' is to emphasize that this p-value is associated with the left-sided null hypothesis $H_0: \Theta \leq \theta_0$. Note that the p-value in \eqref{eq:pval.sev} can be extended to a function $\pval^{\leq}_y(\theta) = \prob_\theta\{ S(Y,\theta) \geq S(y,\theta)\}$ that takes any value $\theta$ as its argument, not just $\theta_0$. 


\subsection{Severity measure}

Consider a claim or hypothesis $H$ about the relevant unknowns, and the goal is to assess the extent to which this hypothesis is supported by the observed data.  Mayo proposes a measure that she calls {\em severity} which is a data-dependent function that maps $H$ to values in $[0,1]$.  In words, Mayo's severity measure at the hypothesis $H$ is defined as 
\begin{equation}\label{eq:Mayosev}
\text{{\em the probability of a worse fit between data and $H$ if $H$ is false.}  }  
\end{equation} 
A high severity value indicates that $H$ has been severely tested, meaning that the data strongly support the truthfulness of $H$.

In practice, the decision to assess a particular claim may arise through various means. Mayo considers a very common scenario where the starting point is a null hypothesis that is tested, and rejection or non-rejection of this null hypothesis determines what follow-up claims might be considered. Throughout this section we focus on an initial null hypothesis of the form $H_0: \Theta \leq \theta_0$. On the one hand, if $H_0$ is rejected, the data analyst is in a situation like that described in Section~\ref{SS:prob} and the probing question concerns support for subsets of $H_0^c$, i.e., claims of the form ``$\Theta > \theta$'' for some $\theta > \theta_0$. On the other hand, if $H_0$ is not rejected, then probing questions would concern supersets of $H_0$, i.e., claims of the form ``$\Theta \leq \theta$'' for $\theta > \theta_0$.  We consider these two cases separately below.

 
\begin{description}
\item[Case 1:] {\em Small p-value, probing for support in (subsets of) the alternative}. 

If the p-value for $H_0: \Theta \leq \theta_0$ is small, then we might be inclined to reject $H_0$.  But does the data genuinely support any subsets of the alternative?  To answer this question, we probe by considering various subsets of the alternative, e.g., ``$\Theta > \theta$'' for $\theta>\theta_0$.  In this case, Mayo's severity measure \eqref{eq:Mayosev} specializes to
\begin{align*}
\sev_y(\{\Theta > \theta\}) &= \inf_{\vartheta \leq \theta} \prob_\vartheta\{ S(Y,\theta) < S(y, \theta) \}  \\
    &=1-\sup_{\vartheta \leq \theta} \prob_\vartheta\{ S(Y,\theta) \geq S(y, \theta) \} \\
    &= 1-\pval^{\leq}_y(\theta), \quad \theta > \theta_0.
\end{align*}
Large values of $\sev_y(H)$ are to be interpreted as stronger support in $y$ for the claim $H$.  Since the right-hand side of the above display is a decreasing function of $\theta$, we get the intuitive property that bolder claims are given less support from the data.  

\item[Case 2:] {\em Not-small p-value, probing for support in (supersets of) the null}. 

If the p-value for $H_0: \Theta \leq \theta_0$ is not small, then we would be inclined to tentatively accept the null.  But despite the null or non-significant conclusion, there is still information available in the data about $\Theta$---``no evidence of risk is not evidence of no risk'' \citep[][p.~3]{mayo.book.2018}. In particular, there are claims more inclusive of the null, i.e., $H$ that are implied by $H_0$, that the data might support, e.g., ``$\Theta \leq \theta$'' for $\theta>\theta_0$. Following \eqref{eq:Mayosev}, severity specializes in this case to 
\begin{align*}
  \sev_y(\{\Theta \leq \theta\}) &= \inf_{\vartheta > \theta} \prob_\vartheta\{ S(Y,\theta) > S(y, \theta) \} \\
    &=1-\sup_{\vartheta > \theta} \prob_\vartheta\{ S(Y,\theta) \leq S(y, \theta) \} \\
    &= 1-\pval^{\geq}_y(\theta), \quad \theta > \theta_0. 
\end{align*}
In this case, severity is associated with the p-value for a right-sided null hypothesis, that is, a hypothesis of the form $\Theta \geq \theta$. Again, large values of $\sev_y(H)$ are to be interpreted as stronger support in $y$ for $H \supseteq H_0$. The right-hand side above is increasing in $\theta$, so less-bold claims are more strongly supported by the data.  
\end{description} 


To summarize, Mayo's severity measure is derived from p-value functions determined by the direction of the claims of interest. A practical way to see it is by treating the complement of the claim of interest, $H^c$, as a null hypothesis, so the severity of $H$ is 1 minus the associated p-value:
\begin{quote}
{\em The severity computation [...] could be arrived at through other means, including varying the null hypothesis.} \citep[][p.~346]{mayo.book.2018}
\end{quote}
One can plot the severity function to visualize the details explained above (see Section~\ref{S:examples}), and Mayo refers to these as {\em severity curves}.

\subsection{IMs versus severity}
\label{ss:IMvsSev}

Having introduced Mayo's severity proposal, the goal now is to compare it to the IM proposal presented above.  Starting with the same setup as in the previous subsection, the aforementioned test-based IM construction is tailored to a specific form of hypothesis the data analyst is interested in evaluating.  Therefore, for the special class of problems involving half-line hypotheses, like Mayo exclusively considers, we can mimic her severity construction above and construct an IM based on the most powerful test of a particular null hypothesis. For example, for a left-sided null hypothesis, it can be shown (see Appendix~\ref{app:mayo}) that this construction determines a possibility contour for $\Theta$, 
\begin{equation}
\label{eq:mayo.im.special}
\pi_y(\theta) = \pval_y^\leq(\theta), 
\end{equation}
that corresponds to the p-value function in \eqref{eq:pval.sev}.  In this context, the p-value function above is monotone increasing, hence the corresponding possibility measure $\uPi_y$ satisfies 
\[ \uPi_y(\{\Theta \leq \theta\}) = \pval_y^\leq(\theta) \quad \text{and} \quad \uPi_y(\{\Theta > \theta\}) = 1, \quad \theta \in \RR. \]
By conjugacy, the corresponding necessity measure $\lPi_y$ satisfies 
\[ \lPi_y(\{\Theta \leq \theta\}) = 0 \quad \text{and} \quad \lPi_y(\{\Theta > \theta\}) = 1-\pval_y^\leq(\theta), \quad \theta \in \RR. \]
To compare ours and Mayo's solutions, we consider the two cases as above separately. 
\begin{description}
\item[Case 1.] {\em Small p-value, probing for support in (subsets of) the alternative}.  

In this case, $\pval_y(\theta_0)$ is small, so probing entails considering hypotheses of the form $\{\Theta > \theta\}$ for $\theta > \theta_0$.  As shown above, 
\[ \sev_y(\{\Theta > \theta\}) = 1 - \pval_y^\leq(\theta) = \lPi_y(\{\Theta > \theta\}), \quad \theta > \theta_0. \]
Therefore, ours and Mayo's probing solutions exactly agree. 
\item[Case 2.] {\em Not-small p-value, probing for support in (supersets of) the null}. 

In this case, $\pval_y(\theta_0)$ is not small, so probing entails considering hypotheses of the form $\{\Theta \leq \theta\}$ for $\theta > \theta_0$.  As shown above, 
\[ \sev_y(\{\Theta \leq \theta\}) = 1 - \pval_y^\geq(\theta) > 0 = \lPi_y(\{\Theta \leq \theta\}), \quad \theta > \theta_0. \]
Therefore, ours and Mayo's probing solutions disagree, and her severity measure dominates the IM's support measure. 
\end{description}
The take-away message is that Mayo's severity solution agrees with the IM solution in Case~1 but not in Case~2.  This difference, of course, deserves some explanation.  

At least at first glance, Mayo's approach is appealing: {\em both} the initial assessment and the follow-up probing assessments are based on use of a most powerful test.  Under ordinary circumstances, few statisticians would find issues concerning the use of a most powerful test---but these are not ``ordinary circumstances.''  Ours and Mayo's goal here is beyond what the classical theory was designed for, i.e., to probe for support in certain follow-up hypotheses depending on the outcome of the initial test, so further consideration is needed.  Indeed, that we are even entertaining the idea of probing implies that we aim to do more than test a hypothesis, so it is not obvious that a standard hypothesis test is the correct starting point.  If one chooses to start with classical hypothesis testing, and if error rate control in the entire probing procedure is desired (see below), then it must be treated more-or-less like a {\em multiple testing} problem.  We say ``more-or-less'' because typical multiple testing has a fixed collection of hypotheses being tested simultaneously, whereas probing ought to allow the investigator to peek at the data before deciding on what other hypotheses besides $H_0$ to consider.  The only probing scheme that 
\begin{itemize}
\item[(a)] rejects $H_0$ if the p-value in \eqref{eq:pval.sev} no more than $\alpha$, and 
\vspace{-2mm}
\item[(b)] controls the overall probing error rate at $\alpha$ over any user probing policy
\end{itemize} 
is one that, when $\pval_y^\leq(\theta_0)$ is not small, assigns support 0 to all hypotheses $H$ that are supersets of $H_0$.  That is precisely what the IM solution does: it is not a ``conservative'' solution, it is the only way to achieve the dual objectives (a) and (b) above.  

It remains to justify objective (b), the desirability of error rate control of the entire probing scheme, in the spirit of Corollary~\ref{cor2}. Mayo and her error-statistician followers adhere to the belief that justification of conclusions about unknowns, drawn through an application of a statistical method to real-world observed data, is directly tied to existence of a proof that the statistical method being employed controls error probabilities relative to the model posited for observable data.  If the error-statisticians prioritizes control of the Type~I error of the initial test, then why would overall probing error control for the entire probing scheme, as in \eqref{eq:uniform}, not be an equally high---if not higher---priority?  

There is, of course, a partial connection between our proposed IM solution (based on a given test) and Mayo's severe testing: they exactly agree in the typical Case~1 where the data is incompatible with the null hypothesis and the goal is to probe for support in the alternative.  It is only in Case~2 where the solutions diverge, but this is the more challenging case where the data is not incompatible with the null.  In this Case~2, Mayo's solution is greedy in the sense that it (implicitly) introduces new hypotheses and the corresponding hypothesis-specific tests with no adjustments to ensure that the probing scheme is reliable.  It is only after making these adjustments that a direct correspondence with our proposed IM solution can be made in Case~2.  From a higher level perspective, we think it should also be of general interest to the imprecise probability community that certain kinds of imprecision are necessary in order to achieve the performance and probativeness properties that statisticians and scientists want and need.

Finally, we must admit that the IM solution presented above is not fully satisfactory.  Even though assigning support 0 for all probing-relevant hypotheses in the Case~2 is unavoidable in some sense, the goal of course is to be more efficient.  Towards this, we remind the reader that the comparison just made between the two solutions is one that takes Mayo's starting point as our starting point.  We are not obligated, however, to start with given hypothesis testing procedure and, in fact, it is the holistic perspective described in Section~\ref{SS:ims} above that allows us, among other things, to overcome the trivial support assignments in Case~2 without sacrificing reliable probing.   

Several illustrations of this are provided in Section~\ref{S:examples} below and, in particular, a direct comparison of the holistic and test-based IM solutions, along with Mayo's severe-testing solution, is given in Section~\ref{SS:mean}.



\section{Illustrations}
\label{S:examples}



\subsection{Normal mean}
\label{SS:mean}

\citet[][p.~142]{mayo.book.2018} describes a hypothetical water plant where the water it discharges is intended to be roughly 150 degrees Fahrenheit.  More specifically, water temperature measurements are assumed to be normally distributed with mean $\Theta$ degrees and standard deviation 10 degrees and, under ideal conditions, $\Theta$ is no more than 150 degrees.  To test the water plant's settings, a sample $Y=(Y_1,\ldots,Y_n)$ of $n=100$ water temperature measurements are taken.  Then the sampling distribution of the sample mean, $\bar Y$, is $\nm(\Theta, 1)$.  Since water temperatures higher than 150 degrees might harm the ecosystem, of  interest is testing the null hypothesis $H_0: \Theta \leq \theta_0$, where $\theta_0 = 150$, versus the alternative $H_1: \Theta > \theta_0$.  After this primary question is addressed, we have the option to probe other hypotheses of the form $(-\infty, \theta]$ or $(\theta, \infty)$, for $\theta$ near 150. 

A most powerful test is available in this example, and it rejects $H_0: \Theta \leq \theta_0$ when $\bar Y - \theta_0$ is large.  Then it follows easily that the p-value function is 
\[ \pval_y^\leq(\theta) = 1 - {\tt pnorm}(\bar y - \theta), \quad \Theta \in \RR, \]
where {\tt pnorm} denotes the standard normal distribution function. As discussed above, this p-value function determines both the IM and Mayo's severity.  



Suppose we observe $\bar y=152$, which is potentially incompatible with the null hypothesis $H_0: \Theta \leq 150$.  Indeed, a plot of $\theta \mapsto \pval_y^\leq(\theta) = \uPi_y(\{\Theta \leq \theta\})$ is shown in Figure~\ref{fig:mean}(a) and we see that, at $\theta = \theta_0 = 150$, the possibility is smaller than 0.05, so we would be inclined to reject the null hypothesis. To probe for support of subsets of the alternative hypothesis, we also plot the severity/necessity 
\[ \lPi_y(\{\Theta > \theta\}) = {\tt pnorm}(\bar y - \theta), \quad \theta \in \RR, \]
and we see that there is, in fact, non-negligible support in the data for, say, the hypothesis ``$\Theta > 151$.''  These results agree exactly with the severity-based analysis presented in \citet{mayo.book.2018}.  The claim is that, for those $\theta$ whose value on red curve in Figure~\ref{fig:mean}(a) is relatively large, e.g., values near $\theta=151$ and perhaps up to $\theta=152$, the hypothesis ``$\Theta > \theta$'' garners non-negligible support from the data.  

\begin{figure}[t]
\begin{center}
\subfigure[$\bar y = 152$, ``reject $H_0$'']{\scalebox{0.6}{\includegraphics{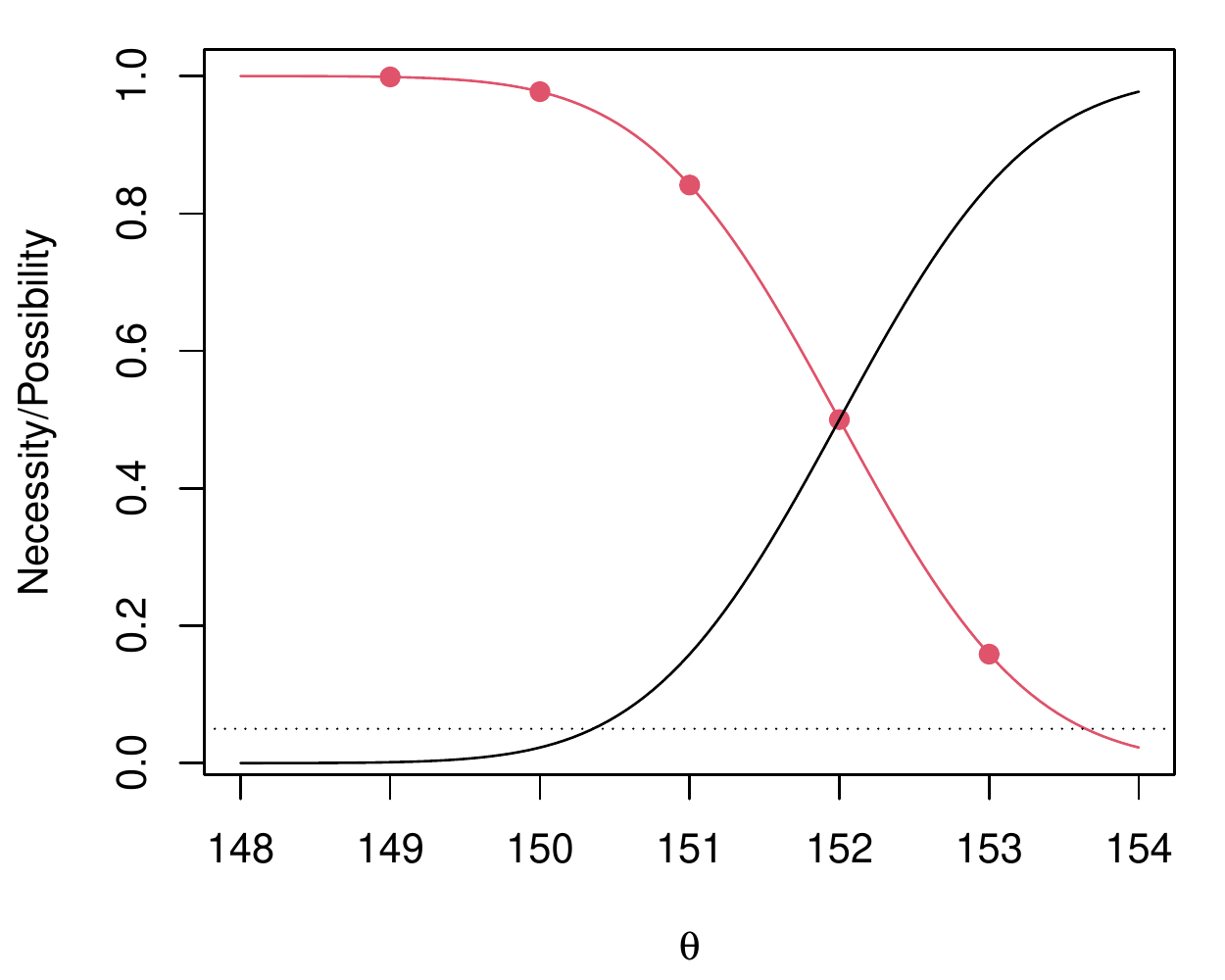}}}
\subfigure[$\bar y = 151$, ``do not reject $H_0$'']{\scalebox{0.6}{\includegraphics{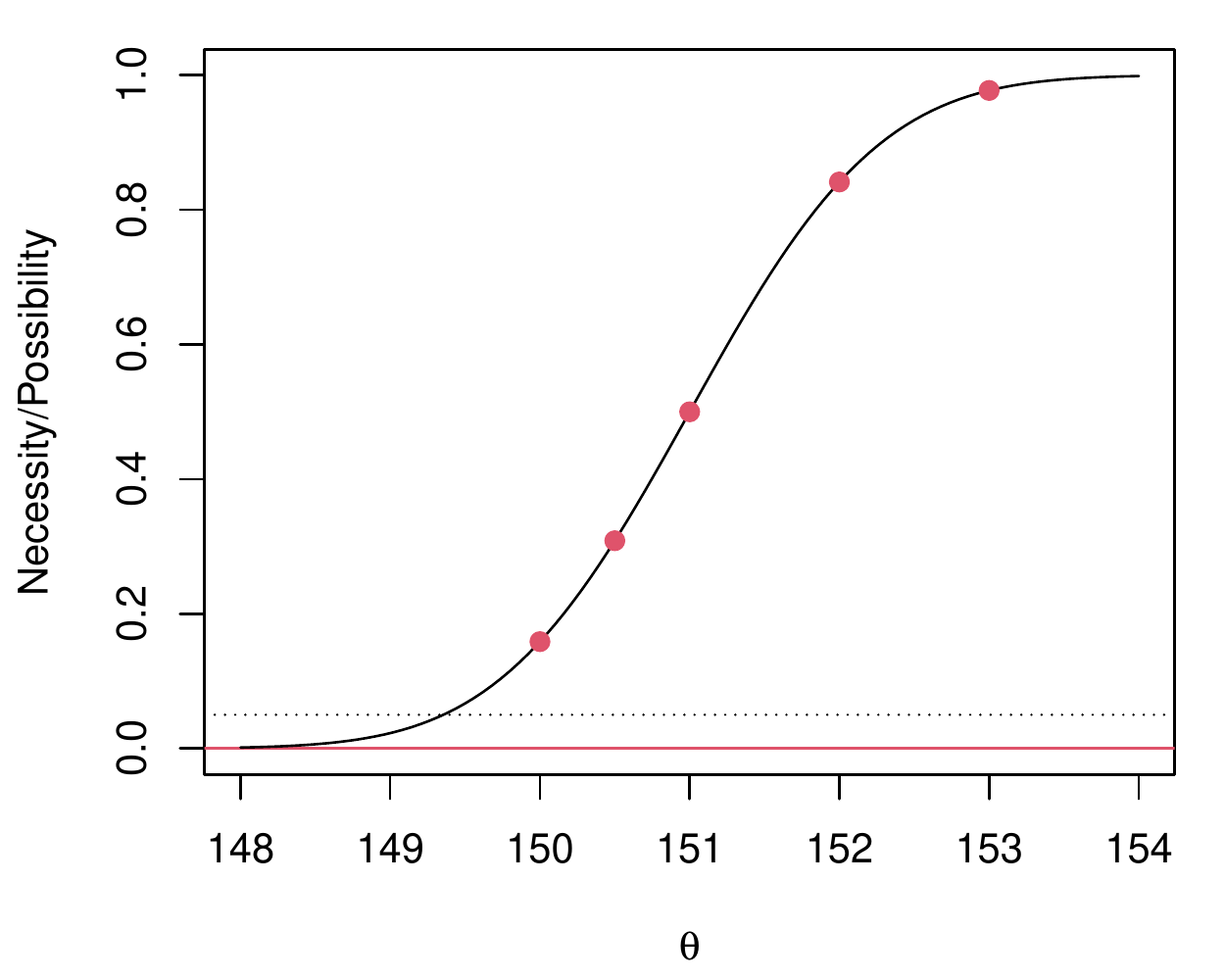}}}
\end{center}
\caption{Results for the normal mean example.  Panel~(a) shows a case where the null would be rejected, with $\theta \mapsto \uPi_y(\{\Theta \leq \theta\})$ in black and $\theta \mapsto \lPi_y(\{\Theta > \theta\}) \equiv 0$ in red; red dots correspond to the severity values in Table~3.1 of \citet{mayo.book.2018}. Panel~(b) shows a case where the null is not rejected, with $\theta \mapsto \uPi_y(\{\Theta \leq \theta\})$ in black and $\theta \mapsto \lPi_y(\{\Theta \leq \theta\}) \equiv 0$ in red; red dots correspond to the severity values in Table~3.3 of \citet{mayo.book.2018}.}
\label{fig:mean}
\end{figure}

Next, consider the case where $\bar y = 151$, which is too small to have grounds for rejecting $H_0$.  In such cases,  the goal would be to probe for potential support in hypotheses implied by the null.  Figure~\ref{fig:mean}(b) shows a plot of the possibility measure $\theta \mapsto \uPi_y(\{\Theta \leq \theta\})$, similar to that in Panel~(a), and necessity measure $\theta \mapsto \lPi_y(\{\Theta \leq \theta\}) \equiv 0$.  Also shown are the values of $\sev_y(\{\Theta \leq \theta\})$ for a few choice values of $\theta$ taken from Table~3.3 in \citet[][p.~145]{mayo.book.2018}, and there are two relevant points to note: 
\begin{itemize}
\item First, as expected in Case~2 situations like this one, Mayo's severity values drastically differ from the IM's necessity measure values.
\vspace{-2mm}
\item Second, thanks to the location parameter structure and symmetry of the normal model, the severity values actually agree with the IM's possibility measure values, which highlights the effects of this particular severity measure's greediness---it unexpectedly leads to simply treating the p-value as a measure of support!
\end{itemize} 
While the IM's trivially constant necessity measure is not a satisfactory solution to the probing problem, at least it does have the desirable error rate control property. Moreover, the IM's conclusion agrees with the understanding that even the most powerful test is incapable of offering support {\em for} the null hypothesis.  

Finally, we take a step back from the test-based focus, to approach the problem with a more holistic perspective.  This is intended to highlight the differences between the general IM framework and Mayo's severe testing approach that is centrally focused on a given test procedure.  The likelihood-based IM has possibility contour 
\[ \pi_y(\theta) = 1 - |2 \, {\tt pnorm}(|\bar y - \theta|) - 1|, \quad \theta \in \RR, \]
Figure~\ref{fig:mean.lik} shows plots of this contour function for two data sets: one with $\bar y=152$ and the other with $\bar y=151$.  These plots also show the possibility measure 
\[ \theta \mapsto \uPi_y(\{\Theta \leq \theta\}) = \sup_{\vartheta \leq \theta} \pi_y(\vartheta) = \begin{cases} \pi_y(\theta) & \text{if $\theta \leq \bar y$} \\ 1 & \text{if $\theta > \bar y$}, \end{cases} \]
which would be used to initially assess the null hypothesis.  As above, these two data sets correspond to Case~1 and Case~2, respectively, so we proceed differently with probing.  For the first data set, probing involves the necessity measure 
\[ \theta \mapsto \lPi_y(\{\Theta > \theta\}) = 1 - \uPi_y(\{\Theta \leq \theta\} = \begin{cases} 1-\pi_y(\theta) & \text{if $\theta \leq \bar y$} \\ 0 & \text{if $\theta > \bar y$}, \end{cases} \]
and, for the second data set, probing involves the necessity measure 
\[ \theta \mapsto \lPi_y(\{\Theta \leq \theta\}) = \begin{cases} 0 & \text{if $\theta \leq \bar y$} \\ 1-\pi_y(\theta) & \text{if $\theta > \bar y$}. \end{cases} \]
These two functions are plotted in red in Figure~\ref{fig:mean.lik}(a) and Figure~\ref{fig:mean.lik}(b), respectively.  In the former case, there is non-trivial support for the claim ``$\Theta > \theta$'' with $\theta$ up to roughly 151 or 151.5, but 0 support with $\theta$ more than 152.  In the latter case, there is non-trivial support for the claim ``$\Theta \leq \theta$'' with $\theta$ as small as 152 or perhaps 151.5.

\begin{figure}[t]
\begin{center}
\subfigure[$\bar y=152$, ``reject $H_0$'']{\scalebox{0.6}{\includegraphics{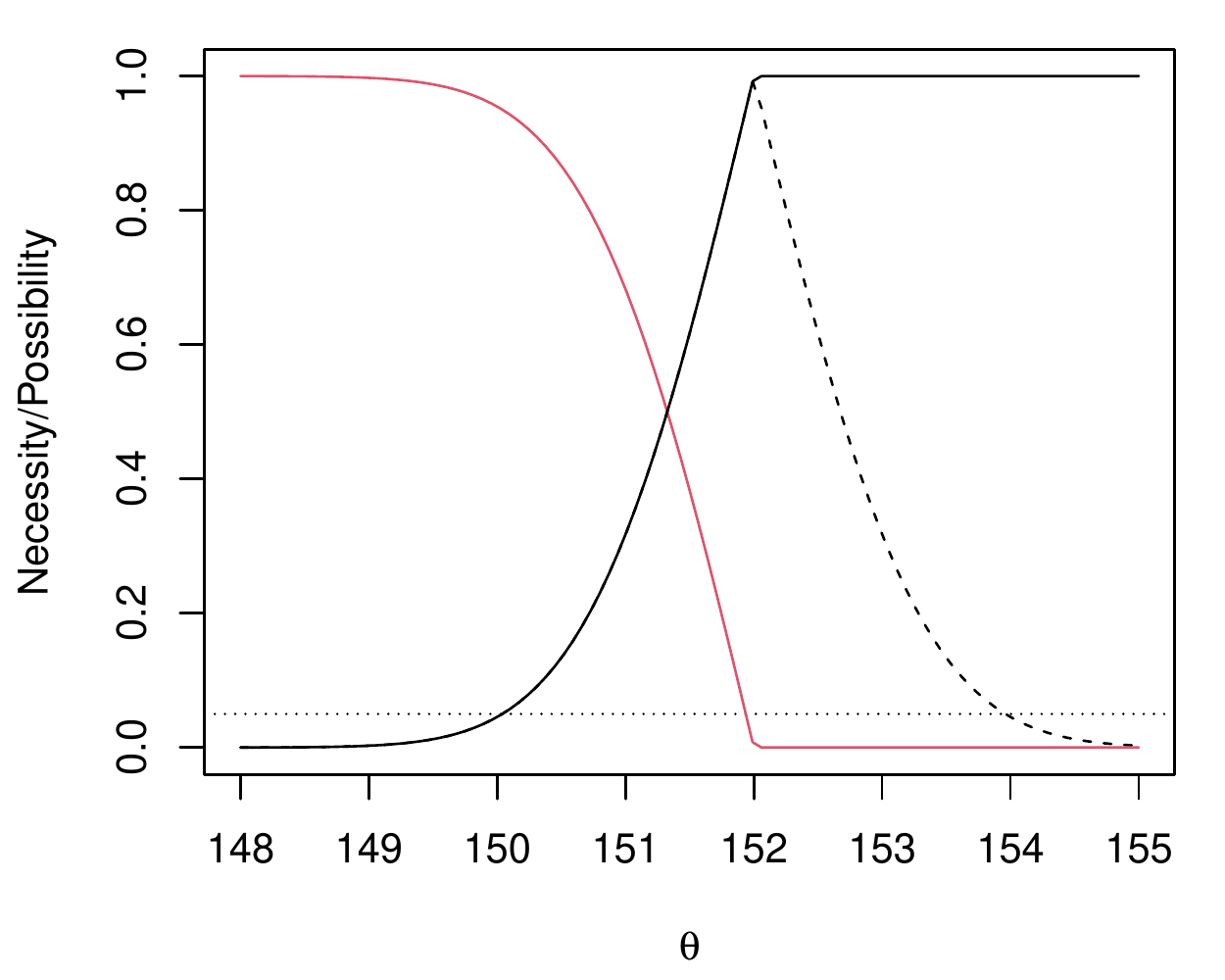}}}
\subfigure[$\bar y=151$, ``do not reject $H_0$'']{\scalebox{0.6}{\includegraphics{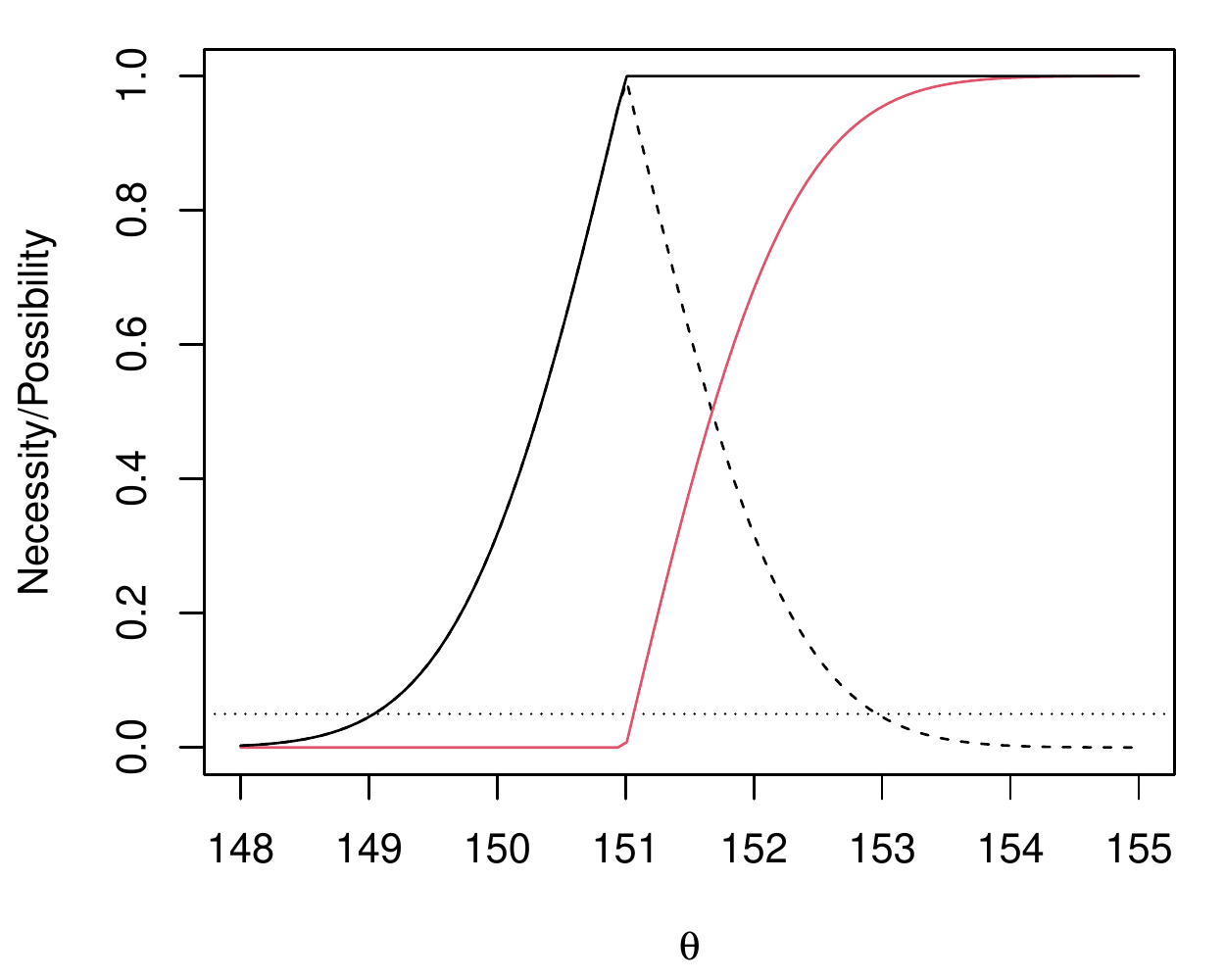}}}
\end{center}
\caption{Results of the holistic, likelihood-based IM for the normal mean problem.  Both panels show the possibility contour $\theta \mapsto \pi_y(\theta)$ and the possibility measure $\theta \mapsto \uPi_y(\{\Theta \leq \theta\})$ as dashed and solid black lines, respectively.  Panel~(a) corresponds to Case~1 so the necessity measure $\theta \mapsto \lPi_y(\{\Theta > \theta\})$ is shown in red.  Panel~(b) corresponds to Case~2 so the necessity measure $\theta \mapsto \lPi_y(\{\Theta \leq \theta\})$ is shown in red.}
\label{fig:mean.lik}
\end{figure}

\subsection{Binomial proportion}
\label{SS:bin}

Suppose an individual claims to possess psychic abilities. To test the validity of his claim, we set up the following experiment.  From a collection of five fixed symbols, a computer will generate one of these at random, and the claimed psychic will be asked to guess which of the five symbols the computer generated.  This will be repeated $n=20$ times and the result is a number $Y$ of correct guesses.  Then $Y$ has a binomial distribution with parameters $n=20$ and $\Theta \in [0,1]$ unknown.  Of course, given $Y=y$ the likelihood function for $\Theta$ is $L_y(\theta) \propto \theta^y(1-\theta)^{n-y}$, for $\theta \in [0,1]$. As a first step, we can carry out the likelihood-based construction in Section~\ref{SS:ims} to get an IM for $\Theta$.  Figure~\ref{fig:bin}(a) shows a plot of the possibility contour function based on an observed $y=8$ correct guesses out of $n=20$ trials.  The level set determined by the horizontal line at $\alpha=0.05$ determines a 95\% confidence interval for $\Theta$. 

To test the psychic's claim, the null hypothesis is $H_0: \Theta \leq \theta_0$, with $\theta_0 = 0.2$.  We can see from the possibility contour plot that $\uPi_y(H_0) < 0.05$, so we are inclined to reject the null.  But is there any support in the data for the psychic's claim?  For this, we probe hypotheses ``$\Theta > \theta$'' for $\theta > \theta_0$.  Figure~\ref{fig:bin}(b) plots the necessity $\theta \mapsto \lPi_y(\{\Theta > \theta\})$.  In this case, we find that hypotheses consisting of bold claims like ``$\Theta > \theta$'' for $\theta$ near 0.4 or even 0.5 are well supported by the data.


\begin{figure}[t]
\begin{center}
\subfigure[$\theta \mapsto \pi_y(\theta)$]{\scalebox{0.6}{\includegraphics{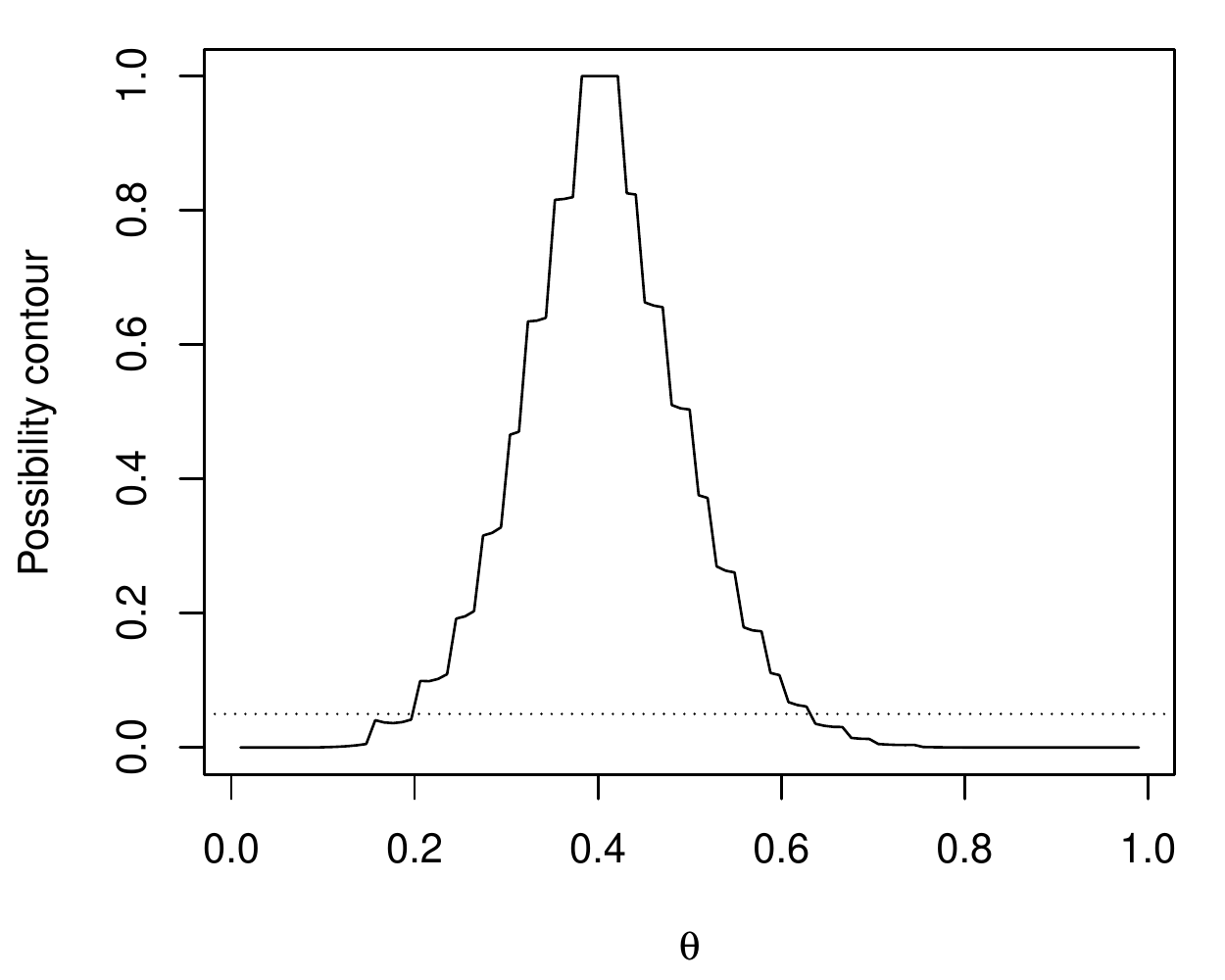}}}
\subfigure[$\theta \mapsto \lPi_y(\{\Theta > \theta\})$]{\scalebox{0.6}{\includegraphics{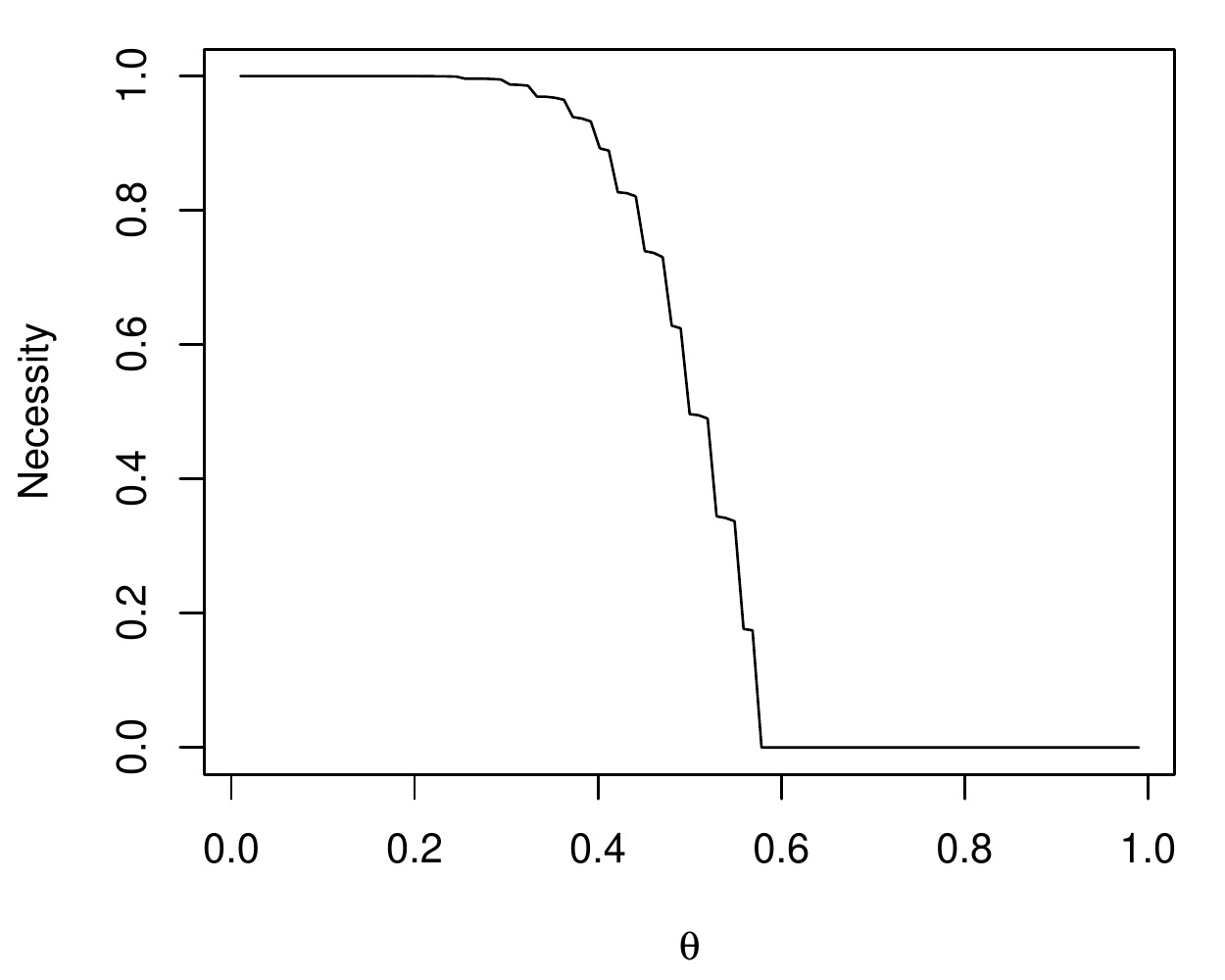}}}
\end{center}
\caption{IM results for the binomial example in Section~\ref{SS:bin}.}
\label{fig:bin}
\end{figure}

\subsection{Bivariate normal correlation}
\label{SS:corr}

Suppose that $Y$ consists of $n$ independent and identically distributed pairs $Y_i = (Y_{1,i}, Y_{2,i})$ having a bivariate normal distribution with zero means, unit variances, and correlation $\Theta \in [-1,1]$.  If there were a hypothesis $H_0: \Theta \leq \theta_0$, then an asymptotic pivot based on the maximum likelihood estimator, $\hat\theta$, could be constructed and the corresponding Wald test would look very similar to classical z-test like used in Section~\ref{SS:mean} above.  This bivariate normal correlation problem, however, corresponds to a so-called {\em curved exponential family}, so $\hat\theta$ is not a sufficient statistic and, consequently, some information/efficiency is lost in the aforementioned Wald test for finite $n$.  So we consider the more holistic approach and the likelihood-based IM construction from Section~\ref{SS:ims}.  The relative likelihood function has no closed-form expression, but it can be readily evaluated numerically.  Then the corresponding IM output, which requires probability calculations with respect to the bivariate normal model, can be found numerically using Monte Carlo.  


As an illustration of the ideas presented above, consider the law school admissions data analyzed in \citet{efron1982}, which consists of $n=15$ data pairs with $Y_1 = \text{LSAT scores}$ and $Y_2=\text{undergrad GPA}$.  For our analysis, we standardize these so that the mean zero--unit variance is appropriate.  Of course, this standardization has no effect on the correlation, which is our object of interest.  In this case, the sample correlation is 0.776; the maximum likelihood estimator, which has no closed-form expression, is $\hat\theta = 0.789$.  A plot of the plausibility contour $\pi_y$ for this data is shown in Figure~\ref{fig:cor}(a).  The horizontal line at $\alpha=0.05$ determines the 95\% plausibility interval for $\Theta$, which is an exact 95\% confidence interval.  Clearly, the data shows virtually no support for $\Theta=0$, but there is some marginal support for the hypothesis ``$\Theta > 0.5$.'' To probe this further, we plot the necessity measure $\theta \mapsto \lPi_y(\{\Theta > \theta\})$ in Figure~\ref{fig:cor}(b).  As expected from Panel~(a), the latter function is decreasing in $\theta$ and we clearly see no support for ``$\Theta > \theta$'' with $\theta \geq \hat\theta$.  But there is non-negligible support for ``$\Theta > \theta$'' with $\theta$ less than, say, 0.65--0.70. 

\begin{figure}[t]
\begin{center}
\subfigure[$\theta \mapsto \pi_y(\theta)$]{\scalebox{0.6}{\includegraphics{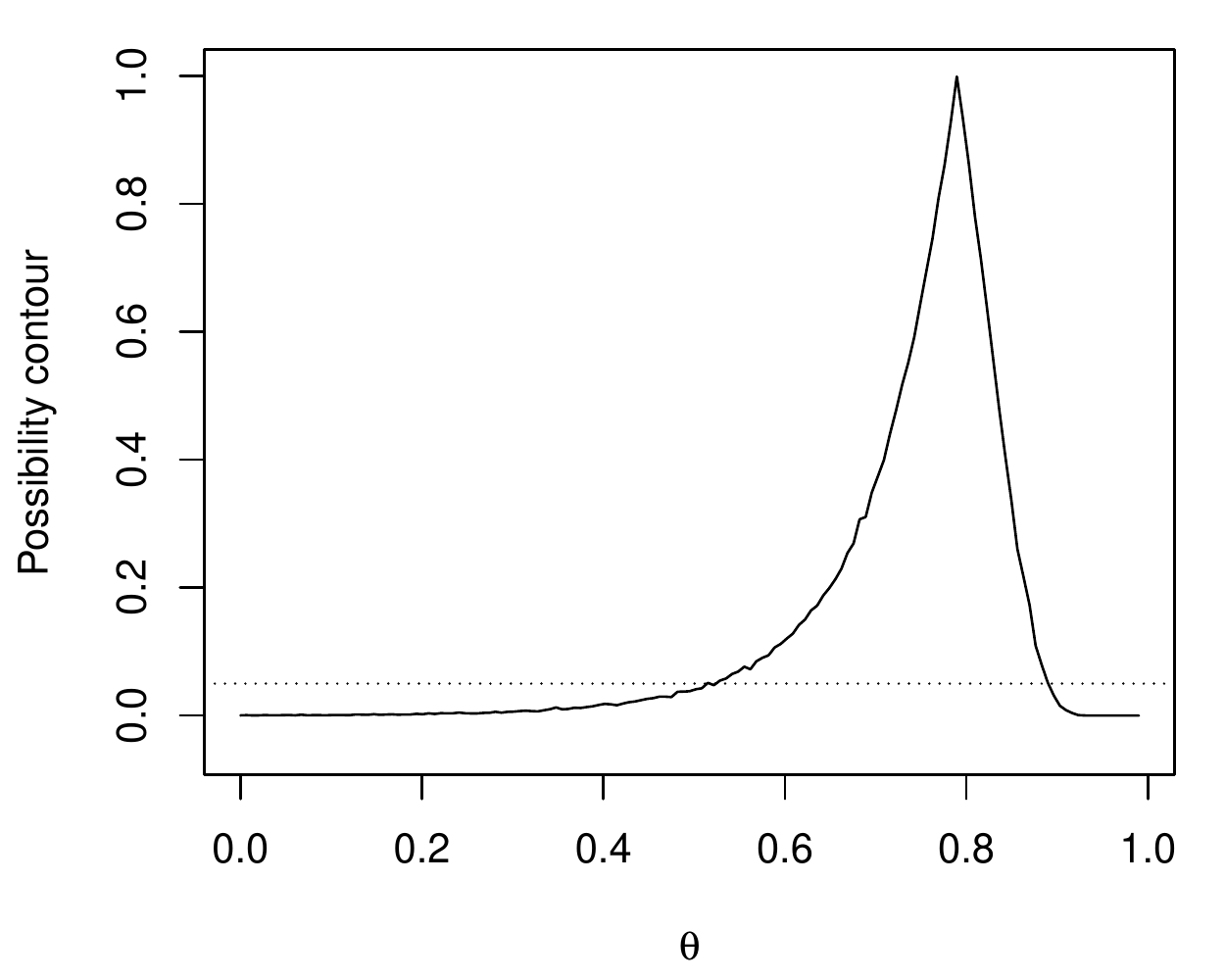}}}
\subfigure[$\theta \mapsto \lPi_y(H_\theta)$]{\scalebox{0.6}{\includegraphics{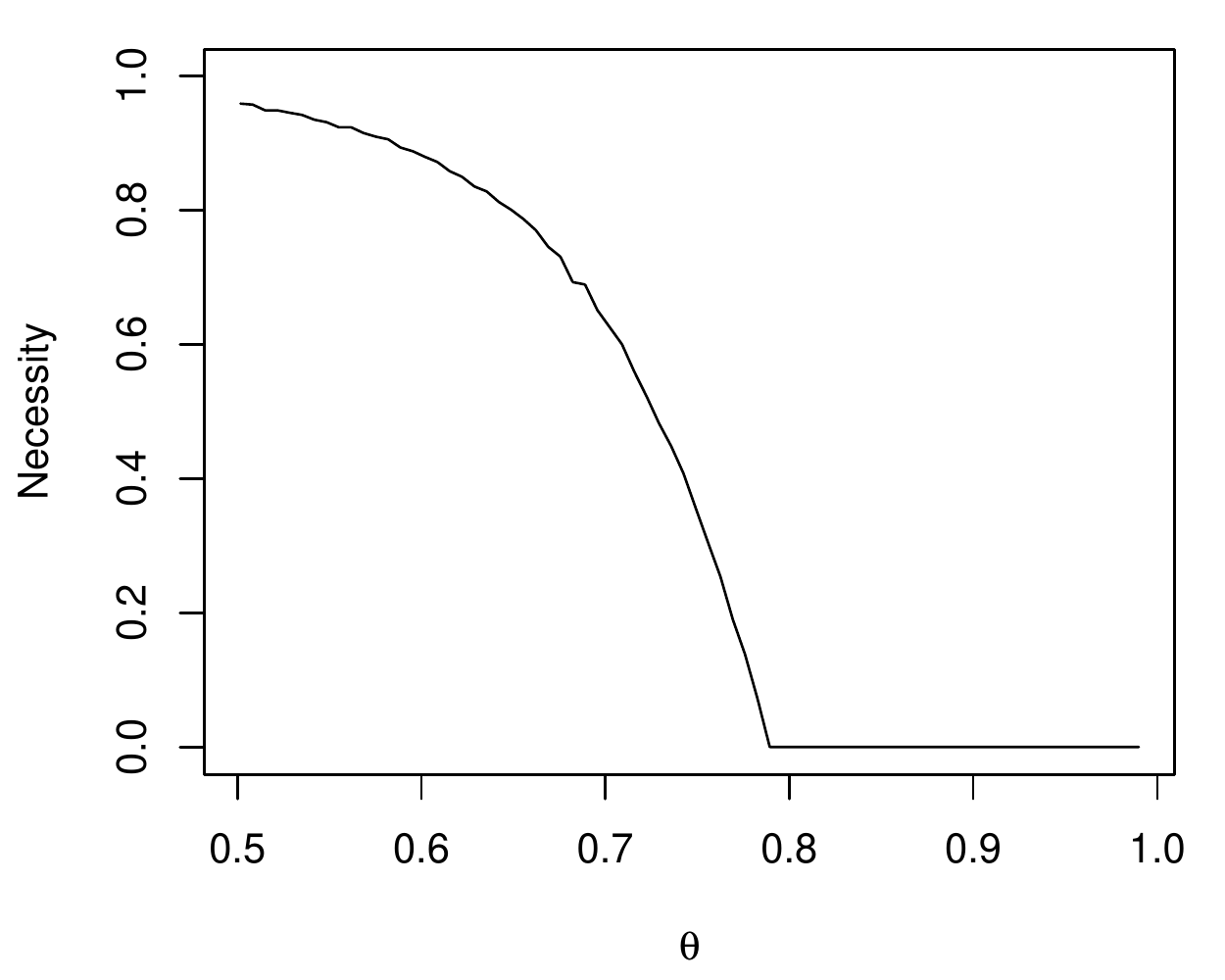}}}
\end{center}
\caption{Summary of the IM analysis of Efron's law school admissions data.}
\label{fig:cor}
\end{figure}

\subsection{Contingency tables}


Data $Y=(Y_{00}, Y_{01}, Y_{10}, Y_{11})$ represents the observed frequencies for each of the four combinations of two binary categorical variables $W$ and $X$, as shown in the $2 \times 2$ contingency table below.  $W$ and $X$ are the response and explanatory variables, respectively. 

\begin{table}[ht]
\centering
\begin{tabular}{c  c | c c | c}
 &  & \multicolumn{2}{c|}{$W$}  &    \\
& & 0 & 1 & Total \\
\hline
\multirow{2}{*}{{$X$}}& 0 & $y_{00}$ & $y_{01}$ & $y_{0 \cdot}$   \\
& 1 & $y_{10}$ & $y_{11}$ & $y_{1 \cdot}$  \\
\hline
& Total & $y_{\cdot 0}$ & $y_{\cdot 1}$ & $n$ 
\end{tabular}
\end{table}

The goal is to quantify uncertainty regarding the association between $W$ and $X$. In other words, to what extent does knowledge of the value of $X$ help us predict the value of $W$? Let $\Theta = (\Theta_0,\Theta_1)$, $\Theta \in [0,1]^2$, denote the conditional probabilities of $Y=1$ given $X=0$ and $X=1$, respectively; that is, 
\[ \Theta_x = \prob(W = 1 \mid X=x), \quad x \in \{0,1\}. \]
The association between $W$ and $X$ can be stated in terms of the difference $\Theta_0 - \Theta_1$. A difference of zero implies no association, and the bigger the difference, positive or negative, the stronger the association. 

To construct an IM for $\Theta_0 - \Theta_1$, we can leverage the marginalization properties of the IM. Our approach involves first constructing an IM for $\Theta$, and then mapping it to a marginal IM for $\Phi = f(\Theta) = \Theta_0 - \Theta_1$. It is important to note that the method for collecting data is fundamental to the specification of the likelihood function $L_y(\theta)$ and, consequently, to the holistic IM construction from Section~\ref{SS:ims}.  
Here, we will consider the scenario where the row totals are fixed. This means that random samples of size $y_{0\cdot}$ and $y_{1\cdot}$ observations are drawn from the populations corresponding to $X=0$ and $X=1$, respectively, and then each observation is classified as either $W=0$ or $W=1$. This experiment produces two independent sequences of Bernoulli trials, therefore, a product of binomial likelihoods: 
\[ L_y(\theta) \propto \theta_{0}^{y_{01}}(1-\theta_0)^{y_{0\cdot}-y_{01}} \times \theta_{1}^{y_{11}}(1-\theta_1)^{y_{1\cdot}-y_{11}}, \quad \theta = (\theta_0, \theta_1) \in [0,1]^2.\]


Consider a hypothetical clinical trial in which $n=50$ participants are randomly and equally divided into two groups. One group receives a drug, the other a placebo. After a year, the participants undergo an evaluation to determine whether a specific aspect of their health has improved. Table~\ref{t:clinical} shows the data and Figure~\ref{fig:2x2}(a) shows a contour plot of the possibility contour for $\Theta$. The dotted lines correspond to $\hat \theta_y$, the maximum likelihood estimator. Recall that a marginal possibility contour for $\Phi$ can be obtained from the possibility contour for $\Theta$ through \eqref{eq:featureIMcontour}. Figure~\ref{fig:2x2}(b) shows the corresponding necessity and possibility measures for hypotheses $H_{\phi} = (-1,\phi]$ and $H^c_{\phi} = (\phi,1)$, respectively.
Note that $\uPi_y^f(H_0)$ is small, which would make us inclined to reject the hypothesis ``$\Phi\leq 0$.'' Non-negligible necessity measures for $H^c_{\phi}$ with $\phi < 0.1$ can be observed. 
The public-health importance of raw differences like this may be hard to grasp in some applications. In such cases, data analysts often prefer to use the concept of {\em relative risk} $\Phi = g(\Theta) =  \Theta_0/\Theta_1$. Once again, it is straightforward to obtain a possibility contour for $\Phi$ from that for $\Theta$. Figure~\ref{fig:2x2}(c) shows this possibility contour. \citet{agresti2007introduction} points out the difficulties in deriving confidence intervals for the relative risk because of the highly skewed distribution of the plug-in estimator. An advantage of our marginal IM is that a 95\% confidence interval for $\Phi$ is readily available from the possibility contour in Figure~\ref{fig:2x2}(c), being the level set determined by the horizontal line at $\alpha=0.05$. As expected, this interval does not contain $\phi=1$, so the null hypothesis of no association can be rejected.  Figure~\ref{fig:2x2}(d) shows the marginal IM necessity measures for hypotheses $H_{\phi} = (\phi,\infty)$. Evidently there is reasonably strong support for the alternative sub-hypothesis that the risk of disease is at least 20\% higher in the placebo group.


\begin{table}[t]
\centering
\begin{tabular}{c | c c | c}
&\multicolumn{2}{c|}{Disease}&\\
\hline
Group & No & Yes & Total \\
\hline
 Placebo & 11 & 14 & 25   \\
 Drug & 17 & 8 & 25  \\
\hline
\end{tabular}
\caption{Hypothetical clinical trial data.}
\label{t:clinical}
\end{table}

\begin{figure}[t]
\begin{center}
\subfigure[Level sets of $\theta \mapsto \pi_y(\theta)$]{\scalebox{0.46}{\includegraphics{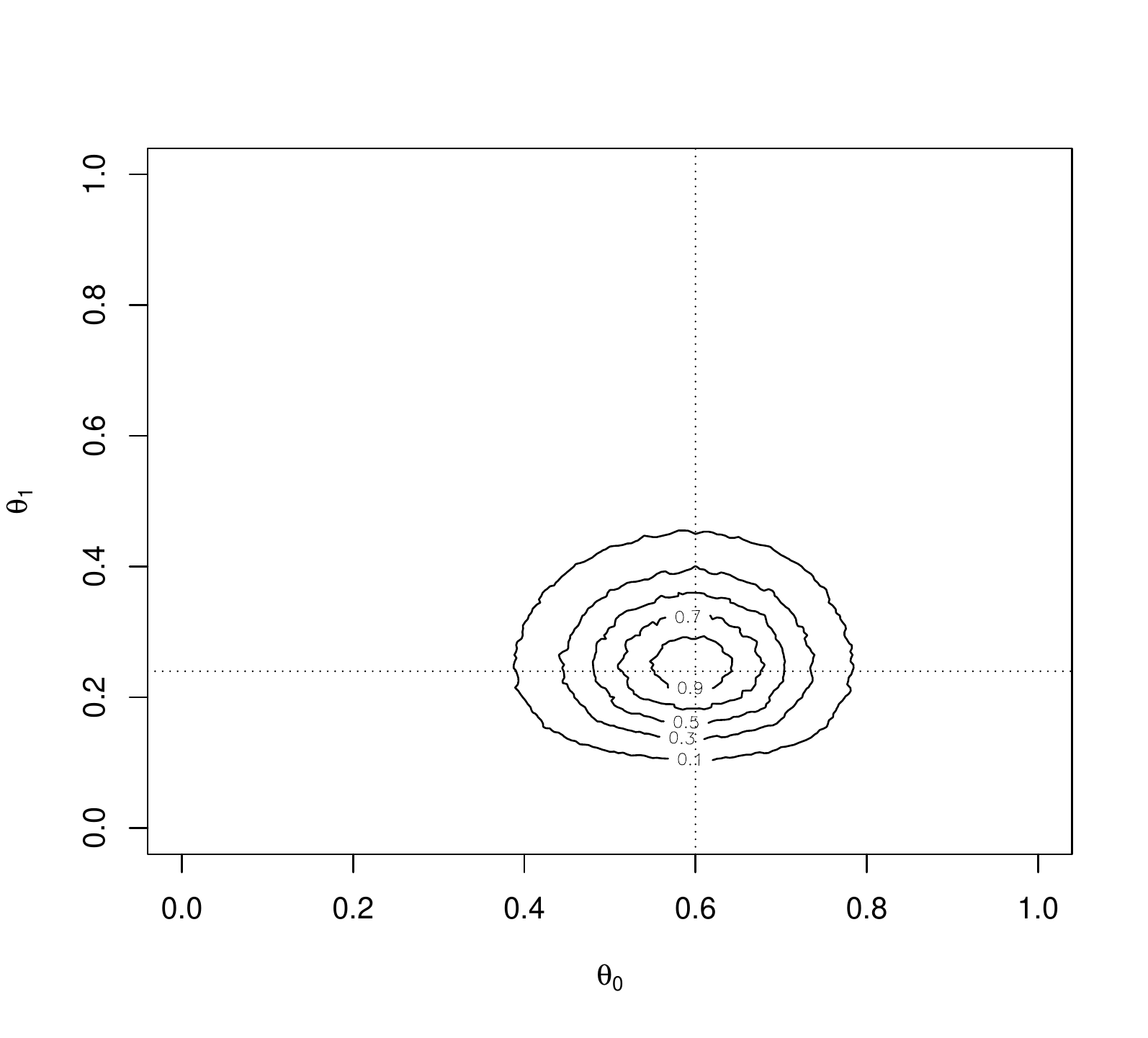}}}
\subfigure[$\phi \mapsto (\lPi_y^{f}(H^c_{\phi}),\uPi_y^{f}(H_{\phi}))$]{\scalebox{0.46}{\includegraphics{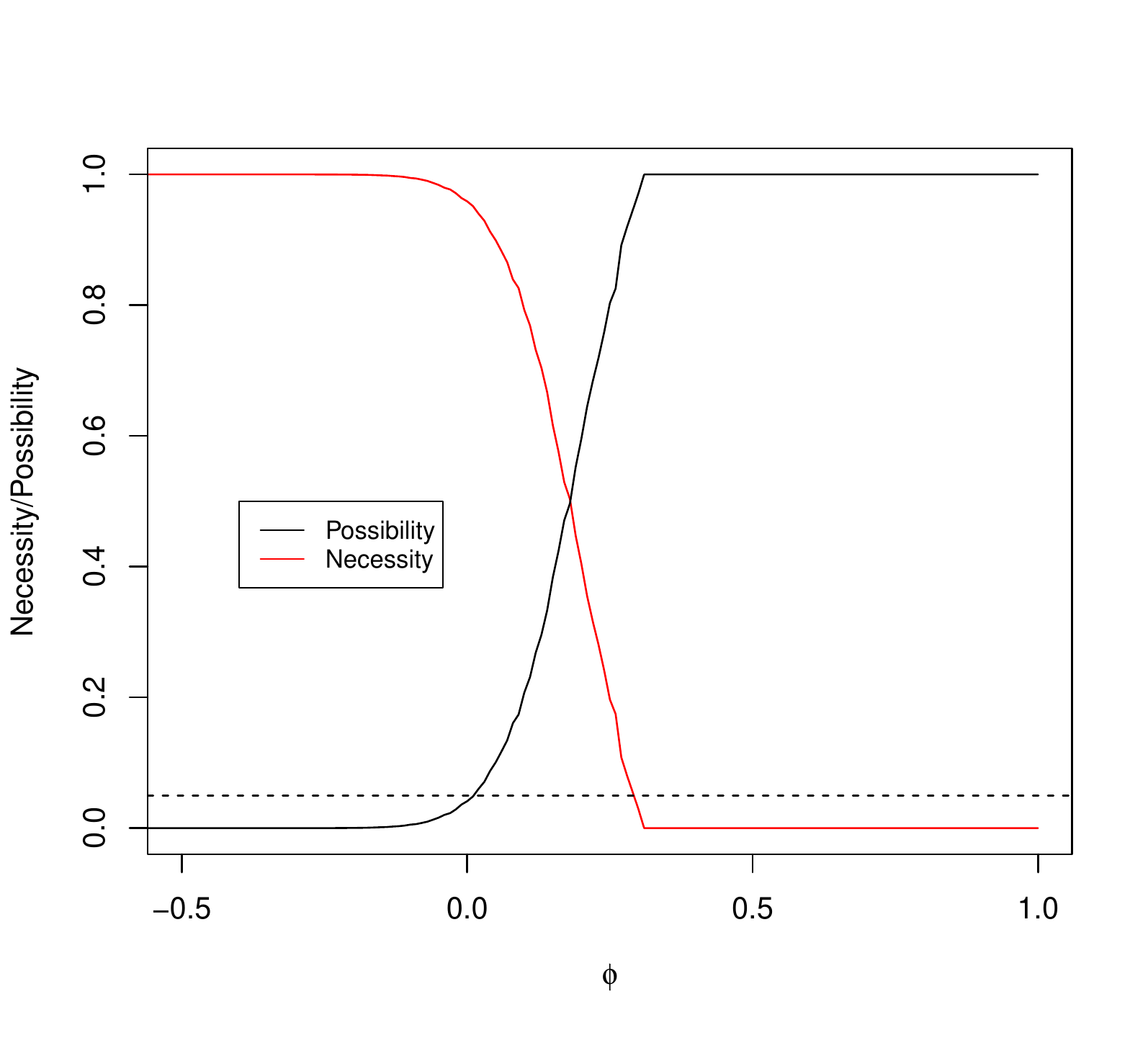}}}
\subfigure[$\phi \mapsto \pi_y^g(\phi)$ ]{\scalebox{0.46}{\includegraphics{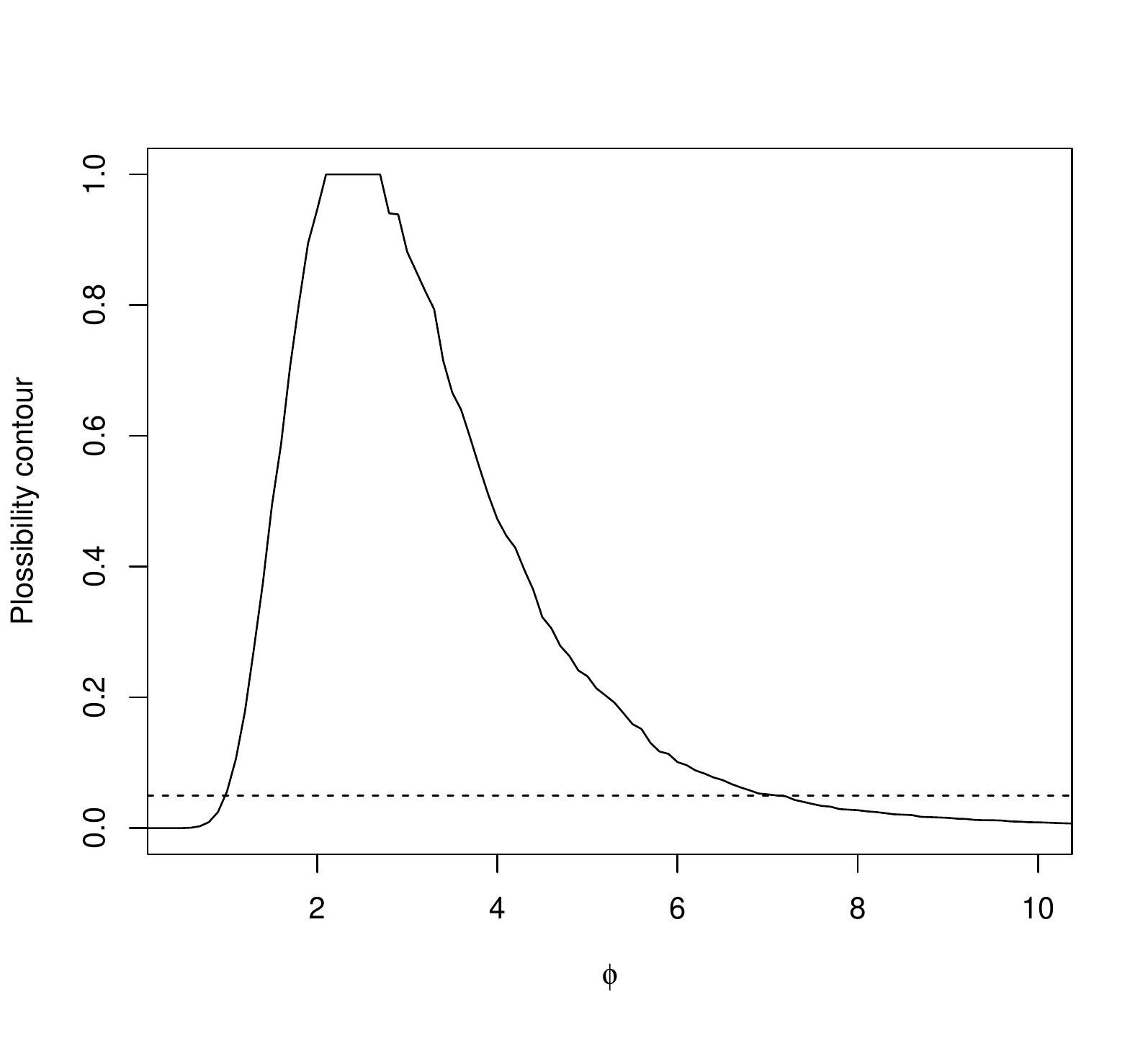}}}
\subfigure[$\phi \mapsto \lPi_y^g(H_{\phi})$]{\scalebox{0.46}{\includegraphics{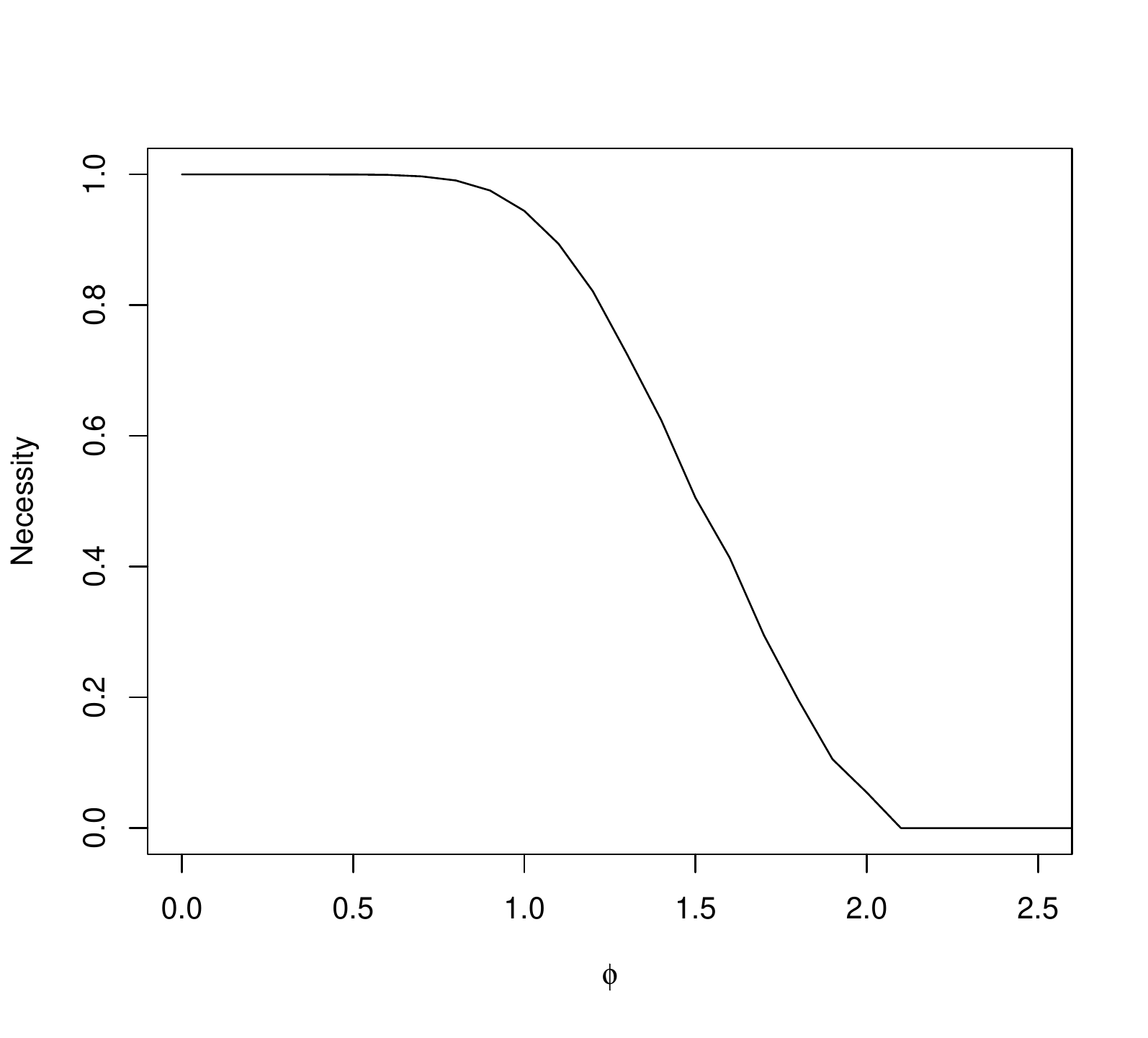}}}
\end{center}
\caption{Results of the valid IM applied to the hypothetical data in Table~\ref{t:clinical}. }
\label{fig:2x2}
\end{figure}

\section{Conclusion}
\label{S:discuss}

Here we showed that there is more to the IM framework than what has been presented in the existing literature. Specifically, the validity property, together with its inherent imprecision implies not only performance, but also probativeness assurances. These insights position IMs as a compelling solution to the long-standing Bayesian versus frequentist two-theory problem, which will undoubtedly benefit the statistical community. They are also of special interest to the belief function/possibility theory community, as it showcases the fundamental importance of their specific brand of imprecision.  

We also compared IMs and Mayo's severe testing framework, and found important points of convergence and divergence.  Specifically, in what we referred to here as ``Case~1'' situations, where the null hypothesis being tested is rejected, and the goal is to probe for support in hypotheses that imply the alternative, Mayo's proposal agrees with (a particular version of) our proposal.  In the ``Case~2'' situations, however, where the null hypothesis is not rejected and the goal is to probe for support in hypotheses implied by the null, the two proposals are very different.  We argue that the proposed IM approach, and its solution to the probing problem, is more appropriate in these Case~2 scenarios by appealing to both common sense and the IM's inherent strong reliability properties.  More generally, Mayo's idea to treat probing as an embellishment on the existing statistical hypothesis testing framework actually hinders ones ability to probe reliably, while what we called the ``holistic'' IM solution is both flexible and provably reliable.  A number of illustrations are presented in Section~\ref{S:examples}; application of these methods in cases beyond these simple, low-dimensional problems will be reported elsewhere.  In particular, in light of recent developments in \citet{cella.martin.imrisk} we are hopeful that a notion of probativeness to model-agnostic statistical learning problems is within reach.

\section*{Acknowledgments}

The authors thank two anonymous reviewers for valuable feedback on a previous version of this manuscript.  This research is partially supported by the U.S.~National Science Foundation, SES--2051225.

\appendix

\section{Hypothesis testing details}
\label{app:test.im}

\subsection{Test-based IM construction}
\label{app:test.im1}

As an alternative to the likelihood-based IM construction in Section~\ref{SS:ims}, \citet{imchar} first showed how to construct an IM driven by a given statistical procedure, i.e., a hypothesis test of a confidence region.  Since that statistical procedure is often tailored to a specific task or question, e.g., testing a particular (form of) hypothesis, this construction would tend to be ``less holistic'' than the likelihood-based construction mentioned above.  It can happen, however, that the two constructions agree, as we will demonstrate below.  To us, the ``more holistic'' likelihood-driven construction is preferred, but the procedure-driven strategy has its own advantages. 

Here we focus the procedure-based construction on cases where a family of hypothesis testing problems are given.  Start with a class of hypotheses $\{H_\theta: \theta \in \TT\}$ about $\Theta$ indexed by the parameter space $\TT$.  These could be singleton/point-null hypotheses, $H_\theta = \{\theta\}$, half-line hypotheses, $H_\theta = (-\infty, \theta]$, in the case of scalar $\Theta$, or other things.  Next, consider a collection $\{\delta_\alpha^\theta: \alpha \in [0,1], \theta \in \TT\}$ of decision rules, where $\delta_\alpha^\theta: \YY \to \{0,1\}$, with the interpretation that $\delta_\alpha^\theta(y) = 1$ means reject $H_\theta$ and $\delta_\alpha^\theta(y)=0$ means do not reject.  For example, in a simple scalar location parameter setting, where $H_\theta = (-\infty, \theta]$, then the testing rule might take the form $\delta_\alpha^\theta(y) = 1(y - \theta > c_\alpha)$, where $c_\alpha$ is a specified threshold and $1(\cdot)$ is the indicator function.  The index $\alpha$ controls the size or Type~I error probability of the test:
\begin{equation}
\label{eq:size}
\sup_{\Theta \in H_\theta} \prob_\Theta\{ \delta_\alpha^\theta(Y) = 1 \} \leq \alpha, \quad \text{for all $\alpha \in [0,1]$, $\theta \in \TT$}. 
\end{equation}

The reader might be asking him/herself why a {\em family} of tests indexed by $\theta \in \TT$ would be needed.  Keep in mind that the IM returns a full-blown imprecise probability defined over $\TT$, so it would be unrealistic to expect that anything meaningful could be obtained based on a test of, say, a single hypothesis.  Often the problem structure suggests a form of hypothesis, e.g., $H: \Theta \leq \theta_0$, and that the same testing procedure would have been used if $\theta_0$ was changed to $\theta_0 + \eta$.  So even if one has just one specific hypothesis/test procedure in mind, often that one belongs to a family like described above, so specifying a family imposes no additional burden on the data analyst.  

In any case, from here, define the function 
\begin{equation}
\label{eq:contour.test}
\pi_y(\theta) = \inf\{\alpha \in [0,1]: \delta_\alpha^\theta(y) = 1\}, \quad \theta \in \TT. 
\end{equation}
This is just the p-value function corresponding to the collection of tests \citep[][Eq.~3.11]{lehmann.romano.2005}.  Under certain conditions on the test (see below), $\theta \mapsto \pi_y(\theta)$ is a genuine possibility contour function on $\TT$, i.e., $\sup_\theta \pi_\theta(y)=1$ for each $y$.  In that case, the IM's possibility and necessity measures are defined via optimization exactly like in \eqref{eq:IMposs} and will enjoy all the same properties, e.g., \eqref{eq:order}.  

What conditions are required of the collection of tests to ensure that the function defined in \eqref{eq:contour.test} is a possibility contour?  Basically, the collection of tests needs to satisfy a certain ``nestedness'' condition.  The concept itself is pretty simple---for each data set $y$, there is a hypothesis $H_\theta$ that cannot be rejected at any level $\alpha$---but a precise mathematical statement is complicated.  In the simplest case, suppose that for each $y$, there exists $\theta$ such that $\delta_\alpha^\theta(y)=0$ for all $\alpha \in [0,1]$, i.e., that there is a hypothesis $H_\theta$ that cannot be rejected based on data $y$.  For example, if $Y$ denotes a sample of size $n$ from a normal distribution with mean $\Theta$ and known variance $\sigma^2$, and if the hypotheses $H_\theta = \{\theta\}$ are singletons, then the usual z-test cannot reject $H_\theta$ with $\theta=\bar y$ at any significance level $\alpha$.  In general, suppose that for each $y$, there exists a net $\alpha \mapsto \theta_y(\alpha)$, for $\alpha \in [0,1]$, such that $\delta_\alpha^{\theta_y(\alpha)}(y) = 0$ for all $\alpha$ sufficiently close to 1.  In the previous normal illustration, for any data $y$, the half-line hypothesis $H_\theta = (-\infty,\theta]$, with 
\[ \theta = \theta_y(\alpha) = \bar y - c \, z_{1-\alpha} \, \sigma \, n^{-1/2}, \quad \text{any $c \in (0,1]$}, \]
would not be rejected for any $\alpha \in [0,1]$.

\subsection{When do the two IM constructions agree?}

The two IM constructions above will agree when the procedure-driven construction is based on the likelihood ratio test for the class $H_\theta = \{\theta\}$ of singleton hypotheses.  In this case, the test procedure could be described by the rule
\[ \delta_\alpha^\theta(y) = 1\{ R(y,\theta) \leq c_\alpha(\theta)\}, \quad y \in \YY, \quad \theta \in \TT, \quad \alpha \in [0,1], \]
where $1(\cdot)$ is the indicator function and $c_\alpha(\theta)$ is chosen to ensure that \eqref{eq:size} holds.  As is well known, thanks to the definition of $c_\alpha(\theta)$ through the sampling distribution of $R(Y,\theta)$ under $\prob_\theta$, it follows that the testing rule can be equivalently expressed as 
\[ \delta_\alpha^\theta(y) = 1\{ \pi_y(\theta) \leq \alpha \}, \quad y \in \YY, \quad \theta \in \TT, \quad \alpha \in [0,1], \]
where $\pi_y(\theta)$ is the p-value/contour in \eqref{eq:IMcontour}.  It is now clear that 
\[ \inf\{ \alpha: \delta_\alpha^\theta(y) = 1\} = \inf\{\alpha: \pi_y(\theta) \leq \alpha \} = \pi_y(\theta), \]
so the contour function defined in \eqref{eq:contour.test} agrees with that in \eqref{eq:IMcontour}.

\subsection{Simplification in Mayo's context}
\label{app:mayo}

To our knowledge, Mayo's developments focus on a special class of problems involving a scalar location parameter $\Theta$ and one-sided null hypotheses like $H_0: \Theta \leq \theta_0$.  There are lots of problems that fit this setting, at least asymptotically, so she has grounds to make these cases her focus.  The relevant structure below also holds more generally---but still scalar parameter---where the model admits a {\em monotone likelihood ratio} property \citep[e.g.,][]{casella.berger.book, karlin.rubin.1956}.  

Recall the setup described in Section~\ref{SS:mayo.back}, where the test procedure rejects the null hypothesis $H_0: \Theta \leq \theta_0$ based on data $y$ if and only if $S(y, \theta_0)$ exceeds some specified threshold $s_\alpha$, indexed by $\alpha \in [0,1]$.  What the location structure implies is that the test statistic can be written as $S(y,\theta_0) = S(y, 0) - \theta_0$.  The point is that there is nothing special about the value $\theta_0$, i.e., the test has exactly the same form if $\theta_0$ is replaced by, say, $\theta_0 + \eta$.  So the single test $\delta_\alpha^{\theta_0}(y) = 1\{S(y,0) > \theta_0 + s_\alpha\}$ determines a family of tests, 
\[ \delta_\alpha^\theta(y) = 1\{S(y,0) > \theta_0 + s_\alpha\}, \quad \theta \in \RR, \quad \alpha \in [0,1], \]
one for each hypothesis $H_\theta = (-\infty,\theta]$, for $\theta \in \RR$.  Then we follow the general strategy in Appendix~\ref{app:test.im1} above---in particular, the definition in \eqref{eq:contour.test}---to get that 
\[ \pi_y(\theta) = \inf\{\alpha \in [0,1]: \delta_\alpha^\theta(y) = 1\} = \pval_y^\leq(\theta), \quad \theta \in \RR. \]
This proves the claim in \eqref{eq:mayo.im.special}.


\bibliographystyle{apalike}
\bibliography{mybib}

\begin{thebibliography}{}

\bibitem[Agresti, 2007]{agresti2007introduction}
Agresti, A. (2007).
\newblock {\em An Introduction to Categorical Data Analysis}.
\newblock Wiley Series in Probability and Statistics. Wiley, second edition.

\bibitem[Balch et~al., 2019]{balch.martin.ferson.2017}
Balch, M.~S., Martin, R., and Ferson, S. (2019).
\newblock Satellite conjunction analysis and the false confidence theorem.
\newblock {\em Proc. Royal Soc. A}, 475(2227):2018.0565.

\bibitem[Berger, 2006]{berger2006}
Berger, J. (2006).
\newblock The case for objective {B}ayesian analysis.
\newblock {\em Bayesian Anal.}, 1(3):385--402.

\bibitem[Casella and Berger, 1990]{casella.berger.book}
Casella, G. and Berger, R.~L. (1990).
\newblock {\em Statistical {I}nference}.
\newblock Wadsworth \& Brooks/Cole Advanced Books \& Software, Pacific Grove,
  CA.

\bibitem[Cella and Martin, 2022a]{cella.martin.imrisk}
Cella, L. and Martin, R. (2022a).
\newblock Direct and approximately valid probabilistic inference on a class of
  statistical functionals.
\newblock {\em Internat. J. Approx. Reason.}, 151:205--224.

\bibitem[Cella and Martin, 2022b]{improbe.2022}
Cella, L. and Martin, R. (2022b).
\newblock Valid inferential models offer performance and probativeness
  assurances.
\newblock In {\em Belief Functions: Theory and Applications: 7th International
  Conference, BELIEF 2022, Paris, France, October 26--28, 2022, Proceedings},
  pages 219--228, Berlin, Heidelberg. Springer-Verlag.

\bibitem[Cox, 2006]{cox_2006}
Cox, D.~R. (2006).
\newblock {\em Principles of Statistical Inference}.
\newblock Cambridge University Press.

\bibitem[Dempster, 1968]{dempster1968a}
Dempster, A.~P. (1968).
\newblock A generalization of {B}ayesian inference. ({W}ith discussion).
\newblock {\em J. Roy. Statist. Soc. Ser. B}, 30:205--247.

\bibitem[Dempster, 2008]{dempster2008}
Dempster, A.~P. (2008).
\newblock The {D}empster--{S}hafer calculus for statisticians.
\newblock {\em Internat. J. Approx. Reason.}, 48(2):365--377.

\bibitem[Dempster, 2014]{dempster.copss}
Dempster, A.~P. (2014).
\newblock Statistical inference from a {D}empster--{S}hafer perspective.
\newblock In Lin, X., Genest, C., Banks, D.~L., Molenberghs, G., Scott, D.~W.,
  and Wang, J.-L., editors, {\em Past, Present, and Future of Statistical
  Science}, chapter~24. Chapman \& Hall/CRC Press.

\bibitem[Den{\oe}ux, 2014]{denoeux2014}
Den{\oe}ux, T. (2014).
\newblock Likelihood-based belief function: justification and some extensions
  to low-quality data.
\newblock {\em Internat. J. Approx. Reason.}, 55(7):1535--1547.

\bibitem[Dubois, 2006]{dubois2006}
Dubois, D. (2006).
\newblock Possibility theory and statistical reasoning.
\newblock {\em Comput. Statist. Data Anal.}, 51(1):47--69.

\bibitem[Dubois et~al., 2004]{dubois2004}
Dubois, D., Foulloy, L., Mauris, G., and Prade, H. (2004).
\newblock Probability-possibility transformations, triangular fuzzy sets, and
  probabilistic inequalities.
\newblock {\em Reliab. Comput.}, 10:273--297.

\bibitem[Efron, 1982]{efron1982}
Efron, B. (1982).
\newblock {\em The {J}ackknife, the {B}ootstrap and other {R}esampling
  {P}lans}, volume~38 of {\em CBMS-NSF Regional Conference Series in Applied
  Mathematics}.
\newblock Society for Industrial and Applied Mathematics (SIAM), Philadelphia,
  Pa.

\bibitem[Fisher, 1935]{fisher1935a}
Fisher, R.~A. (1935).
\newblock The fiducial argument in statistical inference.
\newblock {\em Ann. Eugenics}, 6:391--398.

\bibitem[Fisher, 1973]{fisher1973}
Fisher, R.~A. (1973).
\newblock {\em Statistical Methods and Scientific Inference}.
\newblock Hafner Press, New York, 3rd edition.

\bibitem[Fraser, 2011]{fraser2011.rejoinder}
Fraser, D. A.~S. (2011).
\newblock Rejoinder: ``{I}s {B}ayes posterior just quick and dirty
  confidence?''.
\newblock {\em Statist. Sci.}, 26(3):329--331.

\bibitem[Fraser, 2014]{fraser.copss}
Fraser, D. A.~S. (2014).
\newblock Why does statistics have two theories?
\newblock In Lin, X., Genest, C., Banks, D.~L., Molenberghs, G., Scott, D.~W.,
  and Wang, J.-L., editors, {\em Past, Present, and Future of Statistical
  Science}, chapter~22. Chapman \& Hall/CRC Press.

\bibitem[Hose, 2022]{hose2022thesis}
Hose, D. (2022).
\newblock {\em Possibilistic {R}easoning with {I}mprecise {P}robabilities:
  {S}tatistical {I}nference and {D}ynamic {F}iltering}.
\newblock PhD thesis, University of Stuttgart.

\bibitem[Hose and Hanss, 2021]{hose.hanss.2021}
Hose, D. and Hanss, M. (2021).
\newblock A universal approach to imprecise probabilities in possibility
  theory.
\newblock {\em Internat. J. Approx. Reason.}, 133:133--158.

\bibitem[Jeffreys, 1946]{jeffreys1946}
Jeffreys, H. (1946).
\newblock An invariant form for the prior probability in estimation problems.
\newblock {\em Proc. Roy. Soc. London Ser. A}, 186:453--461.

\bibitem[Karlin and Rubin, 1956]{karlin.rubin.1956}
Karlin, S. and Rubin, H. (1956).
\newblock The theory of decision procedures for distributions with monotone
  likelihood ratio.
\newblock {\em Ann. Math. Statist.}, 27:272--299.

\bibitem[Lehmann and Romano, 2005]{lehmann.romano.2005}
Lehmann, E.~L. and Romano, J.~P. (2005).
\newblock {\em Testing Statistical Hypotheses}.
\newblock Springer Texts in Statistics. Springer, New York, third edition.

\bibitem[Manski, 2003]{manski.book}
Manski, C.~F. (2003).
\newblock {\em Partial {I}dentification of {P}robability {D}istributions}.
\newblock Springer Series in Statistics. Springer-Verlag, New York.

\bibitem[Martin, 2021]{imchar}
Martin, R. (2021).
\newblock An imprecise-probabilistic characterization of frequentist
  statistical inference.
\newblock {\tt arXiv:2112.10904}.

\bibitem[Martin, 2022]{ryanpp2}
Martin, R. (2022).
\newblock Valid and efficient imprecise-probabilistic inference with partial
  priors, {II}. {G}eneral framework.
\newblock {\tt arXiv:2211.14567}.

\bibitem[Martin, 2023]{Martin2023}
Martin, R. (2023).
\newblock Fiducial inference viewed through a possibility-theoretic inferential
  model lens.
\newblock {\tt arXiv:2303.08630}.

\bibitem[Martin and Liu, 2013]{imbasics}
Martin, R. and Liu, C. (2013).
\newblock Inferential models: a framework for prior-free posterior
  probabilistic inference.
\newblock {\em J. Amer. Statist. Assoc.}, 108(501):301--313.

\bibitem[Martin and Liu, 2015]{imbook}
Martin, R. and Liu, C. (2015).
\newblock {\em Inferential {M}odels: {R}easoning with {U}ncertainty}, volume
  147 of {\em Monographs on Statistics and Applied Probability}.
\newblock CRC Press, Boca Raton, FL.

\bibitem[Mayo, 2018]{mayo.book.2018}
Mayo, D.~G. (2018).
\newblock {\em {Statistical Inference as Severe Testing}}.
\newblock Cambridge University Press, Cambridge.

\bibitem[Mayo and Cox, 2006]{mayocox2006}
Mayo, D.~G. and Cox, D.~R. (2006).
\newblock Frequentist statistics as a theory of inductive inference.
\newblock In {\em Optimality}, volume~49 of {\em IMS Lecture Notes Monogr.
  Ser.}, pages 77--97. Inst. Math. Statist., Beachwood, OH.

\bibitem[Nadarajah et~al., 2015]{nadarajah.etal.2015}
Nadarajah, S., Bityukov, S., and Krasnikov, N. (2015).
\newblock Confidence distributions: a review.
\newblock {\em Stat. Methodol.}, 22:23--46.

\bibitem[Neyman, 1955]{Neyman1955}
Neyman, J. (1955).
\newblock The problem of inductive inference.
\newblock {\em Comm. Pure Appl. Math.}, 8(1):13--45.

\bibitem[Popper, 1959]{popper1959}
Popper, K.~R. (1959).
\newblock {\em The {L}ogic of {S}cientific {D}iscovery}.
\newblock Hutchinson and Co., Ltd., London.

\bibitem[Reid and Cox, 2015]{reid.cox.2014}
Reid, N. and Cox, D.~R. (2015).
\newblock On some principles of statistical inference.
\newblock {\em Int. Stat. Rev.}, 83(2):293--308.

\bibitem[Schweder and Hjort, 2002]{schweder.hjort.2002}
Schweder, T. and Hjort, N.~L. (2002).
\newblock Confidence and likelihood.
\newblock {\em Scand. J. Statist.}, 29(2):309--332.

\bibitem[Shackle, 1961]{shackle1961}
Shackle, G. L.~S. (1961).
\newblock {\em Decision {O}rder and {T}ime in {H}uman {A}ffairs}.
\newblock Cambridge University Press, Cambridge.

\bibitem[Shafer, 1976]{shafer1976}
Shafer, G. (1976).
\newblock {\em A {M}athematical {T}heory of {E}vidence}.
\newblock Princeton University Press, Princeton, N.J.

\bibitem[Shafer, 1982]{shafer1982}
Shafer, G. (1982).
\newblock Belief functions and parametric models.
\newblock {\em J. Roy. Statist. Soc. Ser. B}, 44(3):322--352.
\newblock With discussion.

\bibitem[Shafer, 2007]{shafer2007}
Shafer, G. (2007).
\newblock From {C}ournot's principle to market efficiency.
\newblock In Touffut, J.-P., editor, {\em Augustin Cournot: Modelling
  Economics}, pages 55--95. Edward Elgar.

\bibitem[Walley, 1991]{walley1991}
Walley, P. (1991).
\newblock {\em Statistical {R}easoning with {I}mprecise {P}robabilities},
  volume~42 of {\em Monographs on Statistics and Applied Probability}.
\newblock Chapman \& Hall Ltd., London.

\bibitem[Walley, 2002]{walley2002}
Walley, P. (2002).
\newblock Reconciling frequentist properties with the likelihood principle.
\newblock {\em J. Statist. Plann. Inference}, 105(1):35--65.

\bibitem[Wasserman, 1990]{wasserman1990b}
Wasserman, L.~A. (1990).
\newblock Belief functions and statistical inference.
\newblock {\em Canad. J. Statist.}, 18(3):183--196.

\bibitem[Xie and Singh, 2013]{xie.singh.2012}
Xie, M. and Singh, K. (2013).
\newblock Confidence distribution, the frequentist distribution estimator of a
  parameter: a review.
\newblock {\em Int. Stat. Rev.}, 81(1):3--39.

\bibitem[Zabell, 1992]{zabell1992}
Zabell, S.~L. (1992).
\newblock R. {A}. {F}isher and the fiducial argument.
\newblock {\em Statist. Sci.}, 7(3):369--387.

\bibitem[Zadeh, 1975]{Zadeh1975}
Zadeh, L.~A. (1975).
\newblock Fuzzy logic and approximate reasoning.
\newblock {\em Synthese}, 30(3-4):407--428.

\end{thebibliography}

\end{document}